\newtheorem{thm}{Theorem}
\newtheorem{lem}{Lemma}
\theoremstyle{definition}
\newtheorem{claim}{Claim}
\newtheorem{case}{Case}
\newtheorem{remark}[claim]{Remark}
\begin{document}

\title
{Rainbow triangles in arc-colored tournaments
\thanks{The first author is supported by NSFC (Nos. 11301098 and 11601428) and GXNSF (No. 2016GXNSFFA380011); the second author is supported by  NSFC (Nos. 11671320 and U1803263);
the third author is supported by NSFC (No. 11601430), the Fundamental Research Funds for the Central Universities (No.
3102019ghjd003) and China Postdoctoral Science Foundation (No. 2016M590969) and the fourth author is supported by NSFC (No. 11901459).}}
\author{\quad Wei Li $^{a, b}$\thanks{Corresponding author. E-mail addresses: muyu.yu@163.com (W. Li), sgzhang@nwpu.edu.cn (S. Zhang), bai@nwpu.edu.cn (Y. Bai), liruonan@mail.nwpu.edu.cn (R. Li). },
\quad Shenggui Zhang $^{b, c}$,
\quad Yandong Bai $^{b}$,
\quad Ruonan Li $^{b}$\\[2mm]
\small $^{a}$ College of Mathematics and Statistics, Guangxi Normal University, \\
\small Guilin, Guangxi  541004, P.R. China\\
\small $^{b}$ School of Mathematics and Statistics, Northwestern Polytechnical University,  \\
\small Xi'an, Shaanxi  710029, P.R. China\\
\small $^{c}$ Xi¡¯an-Budapest Joint Research Center for Combinatorics, Northwestern Polytechnical University, \\
\small Xi'an, Shaanxi  710129, P.R. China\\}
\date{\today}

\maketitle

\begin{abstract}
Let $T_{n}$ be an arc-colored tournament of order $n$. The maximum monochromatic indegree $\Delta^{-mon}(T_{n})$ (resp. outdegree $\Delta^{+mon}(T_{n})$) of $T_{n}$ is the maximum number of in-arcs (resp. out-arcs) of a same color incident to a vertex of $T_{n}$. The irregularity $i(T_{n})$ of $T_{n}$ is the maximum difference between the indegree and outdegree of a vertex of $T_{n}$. A subdigraph $H$ of an arc-colored digraph $D$ is called rainbow if each pair of arcs in $H$ have distinct colors. In this paper, we show that each vertex $v$ in an arc-colored tournament $T_{n}$ with $\Delta^{-mon}(T_n)\leq\Delta^{+mon}(T_n)$ is contained in at least $\frac{\delta(v)(n-\delta(v)-i(T_n))}{2}-[\Delta^{-mon}(T_{n})(n-1)+\Delta^{+mon}(T_{n})d^+(v)]$ rainbow triangles, where $\delta(v)=\min\{d^+(v), d^-(v)\}$. We also give some maximum monochromatic degree conditions for $T_{n}$ to contain rainbow triangles, and to contain rainbow triangles passing through a given vertex. Finally, we present some examples showing that some of the conditions in our results are best possible.

\medskip
\noindent {\bf Keywords:} arc-colored tournament, rainbow triangle, maximum monochromatic indegree (outdegree), irregularity

\smallskip

\end{abstract}

\section{Introduction}

In this paper we only consider finite and simple graphs and digraphs, i.e. without loops or multiple edges (arcs). A cycle in a digraph always means a {\it directed cycle}. We use \cite{Bang-Jensen: 2001} and \cite{Bondy: 2008} for terminology and notations not defined here.

Let $G=\left(V(G), E(G)\right)$ be a graph. An {\it edge-coloring} of $G$ is a mapping $C: E(G) \rightarrow \mathbb{N}$, where $\mathbb{N}$ is the set of natural numbers. We call $G$ an {\it edge-colored graph}, if it has an edge-coloring.
%We use $C(G)$ and $c(G)$ to denote the set and the number of colors appearing on the edges of $G$, respectively.
We use $C(G)$ to denote the set  of colors appearing on the edges of $G$.
The {\it maximum monochromatic degree} $\Delta^{mon}(G)$ of $G$ is the maximum number of edges of a same color incident to a vertex of $G$. For a vertex $v$ of $G$, the {\it color degree} $d^{c}_{G}(v)$ of $v$ is the number of colors assigned on the edges incident to $v$. The {\it minimum color degree} $\delta^{c}(G)$ is the minimum $d^{c}_{G}(v)$ over all vertices $v$ of $G$. A subgraph $H$ of $G$ is called {\it rainbow} if all edges of $H$ have distinct colors.

The existence of rainbow subgraphs has been widely studied, readers can see the survey papers \cite{Fujita: 2014, X.Li: 2008}. In particular, the existence of rainbow triangles attracts much attention during the past decades. For an edge-colored complete graph $K_n$, Gallai \cite{Gallai: 1967} and Fox et al. \cite{Fox: 2015} characterized the coloring structure of $K_n$ without containing rainbow triangles. Balogh et al. \cite{Balogh: 2017} obtained the
maximum number of rainbow triangles in $3$-edge-colored
complete graphs.  Gy\'{a}rf\'{a}s and Simonyi \cite{Gyarfas: 2004} proved that each edge-colored $K_n$ with $\Delta^{mon}(K_n)<\frac{2n}{5}$ contains a rainbow triangle and this bound is tight. Fujita et al. \cite{FLZ: 2017} proved that each edge-colored $K_n$ with $\delta^c(K_{n})> \log_{2}n$ contains a rainbow triangle and this bound is tight. For a general edge-colored graph $G$ of order $n$, Li and Wang \cite{Li-Wang: 2012} proved that if $\delta^{c}(G)\geq\frac{\sqrt{7}+1}{6}n$, then $G$ has a rainbow triangle. Li \cite{H.Li: 2013} and Li et al. \cite{B.Li: 2014} improved the condition to $\delta^{c}(G)>\frac{n}{2}$ independently, and showed that this bound is tight. Li et al. \cite{Li-Ning-Zhang: 2016} proved that if $G$ is an edge-colored graph of order $n$ satisfying $d^c(u)+d^c(v)\geq n+1$ for every edge $uv\in E(G)$, then it contains a rainbow triangle. Li et al. \cite{B.Li: 2014} proved that if $G$ is an edge-colored graph of order $n$ with $|E(G)|+|C(G)|\geq \frac{n(n+1)}{2}$, then it contains a rainbow triangle. Fujita et al. \cite{FNXZ: 2019} characterized all graphs $G$ satisfying $|E(G)|+|C(G)|\geq \frac{n(n+1)}{2}-1$ but containing no rainbow triangles. Ehard et al. \cite{Ehard-Mohr: 2020} proved that if $G$ is an edge-colored graph of order $n$ with $|E(G)|+|C(G)|\geq \frac{n(n+1)}{2}+k-1$, then it contains at least $k$ rainbow triangles.  Hoppen et al. \cite{HLO: 2017} characterized the graphs with the largest number of
edge-colorings avoiding a rainbow triangle. Aharoni et al. \cite{ADH: 2019} determined the
maximum number of edges in an $n$-vertex edge-colored
graph where all color classes have size at most $k$ and there
is no rainbow triangle. Jin et al. \cite{Jin-Wang-Wang-Lv: 2020} studied rainbow triangles in edge-colored Kneser graphs. For more results on rainbow cycles, we recommend \cite{Albert: 1995, Erdos: 1983, Frieze: 1993, Hahn: 1986} .

Motivated by the fruitful results on the existence of rainbow triangles in undirected graphs, we propose the problem on the existence of rainbow triangles in digraphs. Before proceeding, we first give some terminology and notations in digraphs.

Let $D=\left(V(D), A(D)\right)$ be a digraph. If $uv\in A(D)$, then we say that $u$ {\it dominates } $v$ (or $v$ is {\it dominated} by $u$). For a vertex $v$ of $D$, the {\it in-neighborhood } $N^{-}_{D}(v)$ of $v$ is the set of vertices dominating $v$, and  the {\it out-neighborhood } $N^{+}_{D}(v)$ of $v$ is the set of vertices dominated by $v$. The {\it indegree} $d^{-}_{D}(v)$ and {\it outdegree} $d^{+}_{D}(v)$ of $v$ are the cardinality of $N^{-}_{D}(v)$ and $N^{+}_{D}(v)$, respectively. Let $\delta_{D}(v)=\min\{d^-_{D}(v), d^+_{D}(v)\}$. The {\it maximum indegree $\Delta^{-}(D)$} (resp. {\it maximum outdegree $\Delta^{+}(D)$}) and the {\it minimum indegree $\delta^{-}(D)$} (resp. {\it minimum outdegree $\delta^{+}(D)$}) of $D$, is the maximum $d^{-}_{D}(v)$ (resp. $d^{+}_{D}(v)$) and the minimum $d^{-}_{D}(v)$ (resp. $d^{+}_{D}(v)$) over all vertices $v$ of $D$, respectively. The digraph $D$ is {\it strongly connected} if for every pair of distinct vertices $u$, $v$ in $D$, there exists a $(u, v)$-path. The subdigraph of $D$ induced by $S\subseteq V(D)$ is denoted by $D[S]$. For two disjoint subsets $X$ and $Y$ of $V(D)$, we use $A_{D}(X, Y)$ to denote the set of arcs from $X$ to $Y$. An {\it arc-coloring } of $D$ is a mapping $C$: $A(D)\rightarrow \mathbb{N}$, where $\mathbb{N}$ is the natural number set. We call $D$ an {\it arc-colored} digraph if it has an arc-coloring. We use $C(D)$ and $c(D)$ to denote the set and the number of colors appearing on the arcs of $D$, respectively. For a nonempty subset $S$ of $V(D)$, the {\it maximum monochromatic indegree} (resp. {\it maximum monochromatic outdegree}) of $S$, denoted by $\Delta^{-mon}_{D}(S)$ (resp. $\Delta^{+mon}_{D}(S)$), is the maximum number of in-arcs (resp. out-arcs) of a same color incident to a vertex $v\in S$. We write $\Delta^{-mon}(D)$ for $\Delta^{-mon}_{D}(V(D))$ and $\Delta^{+mon}(D)$ for $\Delta^{+mon}_{D}(V(D))$, respectively.
%For a vertex $v \in V(D)$, the {\it color in-neighborhood $N^{-c}_{D}(v)$} (resp. {\it color out-neighborhood $N^{+c}_{D}(v)$}) of $v$ is the set of colors appearing on the in-arcs (resp. out-arcs) of $v$. The {\it color indegree $d^{-c}_{D}(v)$} (resp. {\it color outdegree $d^{+c}_{D}(v)$}) of $v$ is the cardinality of $N^{-c}_{D}(v)$ (resp. $N^{+c}_{D}(v)$).
If there is no ambiguity, we often omit the subscript $D$ in the above notations. When a set contains only one element $s$, we often write $s$ instead of $\{s\}$. A digraph is called {\it rainbow } if all of its arcs have distinct colors. A digraph, which is not rainbow, is called {\it non-rainbow}.

A {\it tournament} is a digraph such that each pair of vertices are joined by precisely one arc. We use $T_{n}$ to denote a tournament of order $n$. The {\it irregularity} $i(T_{n})$ of $T_{n}$ is the maximum difference between the indegree and outdegree of a vertex. A tournament $T_{n}$ is said to be {\it regular} if $i(T_{n})=0$, and {\it almost regular} if $i(T_{n})=1$. Since $d^{+}(v)+d^{-}(v)=n-1$ for each vertex $v\in V(T_{n})$, we have $\max\{\Delta^{-}(T_{n}), \Delta^{+}(T_{n})\}\leq \frac{n+i(T_{n})-1}{2}$ and $\min\{\delta^{-}(T_{n}), \delta^{+}(T_{n})\}\geq\frac{n-i(T_{n})-1}{2}$.

For research on arc-colored tournaments, see \cite{Bai-Fujita-Zhang: 2018, Bai-Li-Zhang: 2018, Bland: 2016}. In this paper we focus on the existence and enumeration of rainbow triangles in strongly connected arc-colored tournaments. By symmetry, throughout this paper we can assume that $\Delta^{-mon}(T_n)\leq\Delta^{+mon}(T_n)$.

We first consider the number of rainbow triangles passing through a given vertex in an arc-colored tournament.

\begin{thm}\label{thm1}
Let $T_{n}$ be a strongly connected arc-colored tournament with $\Delta^{-mon}(T_n)\leq\Delta^{+mon}(T_n)$. Then for each vertex $v$ of $T_n$, the number of rainbow triangles containing $v$ is at least
$$\frac{\delta(v)\left(n-\delta(v)-i(T_n)\right)}{2}-\left[\Delta^{-mon}(T_{n})(n-1)+\Delta^{+mon}(T_{n})d^+(v)\right].$$
\end{thm}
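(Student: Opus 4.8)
The plan is to count all directed triangles through $v$ and then subtract an upper bound on those that are non-rainbow. Since $T_n$ is a tournament, every directed triangle through $v$ has the form $v\to a\to b\to v$ with $a\in N^+(v)$, $b\in N^-(v)$ and $ab\in A(T_n)$; conversely each such arc $ab$ gives one triangle. Hence the total number $\tau(v)$ of directed triangles through $v$ equals $|A_{T_n}(N^+(v),N^-(v))|$. I would compute this by looking at out-neighbourhoods inside $N^+(v)$: because $v$ dominates every $a\in N^+(v)$, we have $N^+(a)\subseteq (N^+(v)\setminus\{a\})\cup N^-(v)$, so $|N^+(a)\cap N^-(v)|=d^+(a)-|N^+(a)\cap N^+(v)|$. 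Summing over $a\in N^+(v)$ and noting that $\sum_{a\in N^+(v)}|N^+(a)\cap N^+(v)|$ is exactly the number of arcs of the subtournament $T_n[N^+(v)]$, namely $\binom{d^+(v)}{2}$, gives $\tau(v)=\sum_{a\in N^+(v)}d^+(a)-\binom{d^+(v)}{2}$.

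To bound $\tau(v)$ from below I would use the minimum outdegree estimate $d^+(a)\ge\delta^+(T_n)\ge\frac{n-i(T_n)-1}{2}$ recorded in the introduction, obtaining
\[
\tau(v)\ \ge\ d^+(v)\cdot\frac{n-i(T_n)-1}{2}-\frac{d^+(v)(d^+(v)-1)}{2}=\frac{d^+(v)\bigl(n-d^+(v)-i(T_n)\bigr)}{2}.
\]
Running the symmetric computation through $N^-(v)$ (using $v\in N^+(b)$ for $b\in N^-(v)$ and $\delta^-(T_n)\ge\frac{n-i(T_n)-1}{2}$) yields $\tau(v)\ge\frac{d^-(v)(n-d^-(v)-i(T_n))}{2}$. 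Choosing whichever of $d^+(v),d^-(v)$ realises $\delta(v)$ then gives $\tau(v)\ge\frac{\delta(v)(n-\delta(v)-i(T_n))}{2}$, the first term of the claimed bound.

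Next I would upper bound the non-rainbow triangles through $v$. A triangle $v\to a\to b\to v$ fails to be rainbow precisely when (i) $C(va)=C(bv)$, or (ii) $C(va)=C(ab)$, or (iii) $C(ab)=C(bv)$, so the number of non-rainbow triangles is at most the sum of the counts of these three types (overcounting triangles meeting two conditions only strengthens the upper bound). For type (ii), for each fixed $a\in N^+(v)$ the out-arcs of $a$ sharing the colour $C(va)$ number at most $\Delta^{+mon}(T_n)$, so type (ii) contributes at most $\Delta^{+mon}(T_n)\,d^+(v)$. For type (iii), for each fixed $b\in N^-(v)$ the in-arcs of $b$ sharing the colour $C(bv)$ number at most $\Delta^{-mon}(T_n)$, giving at most $\Delta^{-mon}(T_n)\,d^-(v)$. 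For the flexible type (i), grouping the out-arcs and in-arcs of $v$ by colour and using that at most $\Delta^{-mon}(T_n)$ in-arcs of $v$ carry any single colour bounds it by $\Delta^{-mon}(T_n)\,d^+(v)$. Adding the three contributions telescopes to
\[
\Delta^{-mon}(T_n)\bigl(d^+(v)+d^-(v)\bigr)+\Delta^{+mon}(T_n)\,d^+(v)=\Delta^{-mon}(T_n)(n-1)+\Delta^{+mon}(T_n)\,d^+(v),
\]
using $d^+(v)+d^-(v)=n-1$.

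Subtracting this upper bound on non-rainbow triangles from the lower bound on $\tau(v)$ delivers the theorem. The main point requiring care is the bookkeeping in the last step: type (i) may be bounded either way, and it must be assigned to $\Delta^{-mon}(T_n)$ so that types (i) and (iii) combine into the clean factor $n-1$, which is exactly where the standing normalisation $\Delta^{-mon}(T_n)\le\Delta^{+mon}(T_n)$ fixes the stated form (the reverse tournament produces the symmetric bound with the roles of $\Delta^{-mon}$ and $\Delta^{+mon}$ interchanged). I would also remark that the counting itself does not invoke strong connectivity; it is kept only to match the standing hypothesis used for the companion results.
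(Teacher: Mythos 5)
Your proposal is correct and takes essentially the same approach as the paper: you identify triangles through $v$ with arcs from $N^{+}(v)$ to $N^{-}(v)$ and lower-bound their number via the irregularity (your identity $\tau(v)=\sum_{a\in N^{+}(v)}d^{+}(a)-\binom{d^{+}(v)}{2}$ is a trivially equivalent rearrangement of the paper's Lemma \ref{lemma1} computation, and is in fact the identity the paper uses later in Lemma \ref{lemma3}), and your classification of non-rainbow triangles into types (i)--(iii) with the bounds $\Delta^{-mon}(T_n)d^{+}(v)$, $\Delta^{+mon}(T_n)d^{+}(v)$, $\Delta^{-mon}(T_n)d^{-}(v)$ is exactly the paper's Lemma \ref{lemma2}, including the key choice of charging the type-(i) count to $\Delta^{-mon}(T_n)$ rather than $\Delta^{+mon}(T_n)$. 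Subtracting the two estimates then gives the theorem, just as in the paper.
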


\begin{remark}\label{vertex}
The bound in Theorem \ref{thm1} is tight. Let $T_n$ be an arc-colored tournament of order $n=4k-1+2i(T_n)$, where $k$ is a positive integer. Let $v$ be a vertex of $T_n$ with $d^+(v)=d^-(v)=2k-1+i(T_n)$, $W=N^{+}(v)=\{w_{1}, w_{2}, \ldots, w_{2k-1+i(T_n)}\}$ and $U=N^{-}(v)=\{u_{1}, u_{2}, \ldots, u_{2k-1+i(T_n)}\}$. Let $T_n[W]$ and $T_n[U]$ be two regular tournaments of order $2k-1+i(T_n)$. For each vertex $w_{j}\in W$, let $N^{+}(w_{j})\cap U=\{u_{j}, \ldots, u_{j+k-1}\}$. Then $T_n$ is strongly connected. For each $w_{j}$, $1\leq j\leq 2k-1+i(T_n)$, let $C(vw_{j})=j$, $C(u_{j}v)=j$, $C(w_{j}u_{j+1})=j$ and $C(w_{j}u_{j+2})=j+2$. Finally, color the remaining arcs with distinct new colors. Then
$\Delta^{+mon}(T_n)=\Delta^{-mon}(T_n)=1$, and the number of rainbow triangles containing $v$ is
$$
(k-3)\left(2k-1+i(T_{n})\right)=\frac{\delta(v)\left(n-\delta(v)-i(T_n)\right)}{2}-\left[\Delta^{-mon}(T_{n})(n-1)+\Delta^{+mon}(T_{n})d^+(v)\right].
$$
\end{remark}

Based on Theorem \ref{thm1}, we give some maximum monochromatic degree conditions so that every vertex in $T_{n}$ is contained in a rainbow triangle.

\begin{thm}\label{thm1+}
Let $T_{n}$ be a strongly connected arc-colored tournament with $\Delta^{-mon}(T_n)\leq\Delta^{+mon}(T_n)$ and
\begin{equation*}
  2\Delta^{-mon}(T_{n})+\Delta^{+mon}(T_{n})\leq\left\{
   \begin{aligned}
   &\frac{(n-1-i(T_{n}))(n+1-i(T_{n}))}{4(n-1+i(T_{n}))}, ~~~&1\leq i(T_{n})< \frac{n+3}{3};\\
   &\frac{n-2i(T_{n})}{4}, ~~~&\text{otherwise}.\\
   \end{aligned}
   \right.
  \end{equation*}
Then every vertex of $T_n$ is contained in a rainbow triangle
\end{thm}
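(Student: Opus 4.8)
The strategy is to apply Theorem~\ref{thm1} directly and show that the stated hypothesis forces the lower bound on the number of rainbow triangles through an arbitrary vertex $v$ to be strictly positive. Since Theorem~\ref{thm1} guarantees at least
\[
B(v):=\frac{\delta(v)\bigl(n-\delta(v)-i(T_n)\bigr)}{2}-\bigl[\Delta^{-mon}(T_{n})(n-1)+\Delta^{+mon}(T_{n})d^+(v)\bigr]
\]
rainbow triangles through $v$, it suffices to prove $B(v)>0$ for every $v$ under the given constraint on $2\Delta^{-mon}(T_n)+\Delta^{+mon}(T_n)$. First I would bound the negative (subtracted) term from above: using $d^+(v)\le n-1$ one gets $\Delta^{-mon}(n-1)+\Delta^{+mon}d^+(v)\le (\Delta^{-mon}+\Delta^{+mon})(n-1)$, but a sharper bound is needed, so I would instead exploit $d^+(v)\le \frac{n-1+i(T_n)}{2}$ (the outdegree bound recorded in the introduction) together with $n-1\le n-1$ to write the bracket as at most $\Delta^{-mon}(n-1)+\Delta^{+mon}\cdot\frac{n-1+i(T_n)}{2}$. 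The point of the asymmetric hypothesis $2\Delta^{-mon}+\Delta^{+mon}$ is precisely to control this combination.

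The next step is to bound the positive term $\frac{\delta(v)(n-\delta(v)-i(T_n))}{2}$ from below. Here the quadratic $f(x)=x(n-i(T_n)-x)$ in $x=\delta(v)$ is concave, so on the feasible interval for $\delta(v)$ its minimum occurs at an endpoint. Using $\delta(v)\ge \frac{n-1-i(T_n)}{2}$ from the introduction, and noting $\delta(v)\le \frac{n-1}{2}$, I would evaluate $f$ at the relevant endpoint to obtain a clean lower bound of the form $\frac{1}{2}\cdot\frac{n-1-i(T_n)}{2}\cdot\frac{n+1-i(T_n)}{2}$ (which explains the exact numerator $(n-1-i(T_n))(n+1-i(T_n))$ appearing in the first case of the hypothesis). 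I expect the two regimes in the statement to correspond to which endpoint of the concave function controls the minimum, or equivalently to when the coefficient $n-1+i(T_n)$ in the denominator is comparable to $n-2i(T_n)$; the threshold $i(T_n)<\frac{n+3}{3}$ should emerge from comparing the two resulting candidate lower bounds and taking the weaker (hence safer) one in each range.

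Assembling these, I would show $B(v)\ge \tfrac12 f(\delta(v)) - \bigl[\Delta^{-mon}(n-1)+\Delta^{+mon}\cdot\tfrac{n-1+i(T_n)}{2}\bigr]$ and then verify that the hypothesized upper bound on $2\Delta^{-mon}+\Delta^{+mon}$ makes the bracket strictly smaller than the quadratic lower bound, so that $B(v)>0$. Concretely, in the regime $1\le i(T_n)<\frac{n+3}{3}$ the bound is arranged so that $2\Delta^{-mon}+\Delta^{+mon}$ times $(n-1+i(T_n))$ stays below $\frac{(n-1-i(T_n))(n+1-i(T_n))}{4}$, matching the positive term after multiplying through; the ``otherwise'' case uses the coarser but still sufficient estimate $\frac{n-2i(T_n)}{4}$.

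\emph{Main obstacle.} The genuine difficulty is not the algebra but arranging the estimates so that the \emph{two cases glue correctly at the threshold} $i(T_n)=\frac{n+3}{3}$, and in particular choosing which upper bound on $d^+(v)$ and which lower bound on $\delta(v)$ to use in each regime so that the final inequality is both valid and as strong as the tight examples later in the paper demand. I would need to check that the crossover point genuinely separates the two candidate denominators $4(n-1+i(T_n))$ versus $4$, and to confirm that the worst case over all $v$ (i.e. the $v$ minimizing $B(v)$) is captured by the endpoint estimate rather than by some interior configuration of $\delta(v)$ and $d^+(v)$ that the concavity argument alone does not immediately rule out.
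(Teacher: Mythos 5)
Your overall frame is the same as the paper's: invoke Theorem \ref{thm1} (equivalently Lemmas \ref{lemma1} and \ref{lemma2}) and show that the hypothesis forces the resulting lower bound to be positive at every vertex. The gap is in your decision to bound the two terms \emph{independently}: the quadratic term from below by an endpoint value of $f(\delta(v))=\frac{\delta(v)(n-\delta(v)-i(T_n))}{2}$, and the bracket from above via $d^+(v)\le\frac{n-1+i(T_n)}{2}$. Abbreviate $i=i(T_n)$, $a=\Delta^{-mon}(T_n)$, $b=\Delta^{+mon}(T_n)$, $s=2a+b$. First, you evaluate $f$ at the wrong endpoint: for $i\ge 2$ the minimum of $f$ on $\left[\frac{n-1-i}{2},\frac{n-1}{2}\right]$ is attained at $\delta(v)=\frac{n-1}{2}$ and equals $\frac{(n-1)(n+1-2i)}{8}$, which is smaller than your claimed bound $\frac{(n-1-i)(n+1-i)}{8}$ by exactly $\frac{i(i-2)}{8}$ (this is the content of the paper's Claim 1); so the ``exact numerator matching'' you describe does not come from $f$ at all. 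Second, even after correcting this, the decoupled chain does not close. Under the hypothesis your bracket bound is
\begin{equation*}
a(n-1)+b\cdot\frac{n-1+i}{2}=s\cdot\frac{n-1+i}{2}-ai\le\frac{(n-1-i)(n+1-i)}{8}-ai,
\end{equation*}
and comparing this with the true minimum $\frac{(n-1)(n+1-2i)}{8}$ of the quadratic term leaves a deficit of $\frac{i(i-2)}{8}-ai=\frac{i(i-2-8a)}{8}$, which is strictly positive whenever $i>8a+2$. For instance $a=1$ with $10<i<\frac{n+3}{3}$ is an admissible regime for large $n$, and there your assembled inequality is simply not implied by the hypothesis, so the argument cannot conclude that $B(v)>0$.

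The reason — which you correctly flag as the ``main obstacle'' but do not resolve — is that your two worst cases are incompatible: a vertex with $\delta(v)=\frac{n-1}{2}$ necessarily has $d^+(v)=\frac{n-1}{2}$, not $\frac{n-1+i}{2}$, so the minimizer of $f$ and the maximizer of the bracket can never be the same vertex. The paper's proof exploits precisely this coupling: it splits according to whether $d^+(v)$ is below or above $\frac{n-1}{2}$, and in the latter case substitutes $d^+(v)=n-1-\delta(v)$ (also using $n-1\le 2d^+(v)$ to absorb the term $a(n-1)$ into $d^+(v)(2a+b)$), which reduces the problem to the single-variable ratio $g(\delta)=\frac{\delta(n-\delta-i)}{n-1-\delta}$. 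Since $g''<0$, the minimum of $g$ is at an endpoint, and the comparison of $g\!\left(\frac{n-1}{2}\right)=\frac{n+1-2i}{2}$ with $g\!\left(\frac{n-1-i}{2}\right)=\frac{(n-1-i)(n+1-i)}{2(n-1+i)}$ — not the endpoint comparison for $f$ — is what produces both the threshold $i(T_n)=\frac{n+3}{3}$ and the constant $\frac{(n-1-i)(n+1-i)}{4(n-1+i)}$ in the statement. Without this coupled analysis your estimates are too lossy to reach the stated constants, so the proposal as written has a genuine gap.
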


\begin{remark}\label{rem1}
When $i(T_{n})=0$ or $\frac{n+3}{3}\leq i(T_{n})\leq n-3$, the bounds in Theorem \ref{thm1+} are tight. When $1\leq i(T_{n})< \frac{n+3}{3}$, the gap between the bound in Theorem \ref{thm1+} and the best possible bound is at most $1$ (See Section \ref{sec:3} for examples).
\end{remark}

We also investigate the existence of rainbow triangles in $T_n$.

\begin{thm}\label{thm2}
Let $T_{n}$ be a strongly connected arc-colored tournament with $\Delta^{-mon}(T_n)\leq\Delta^{+mon}(T_n)$. If $n$ is odd and $\Delta^{-mon}(T_{n})<\frac{n^{2}+n-3i(T_{n})^{2}}{12n}$, or $n$ is even and $\Delta^{-mon}(T_{n})<\frac{n^{2}-1-3i(T_{n})^{2}}{12(n-1)}$, then there exists a rainbow triangle in $T_n$.
\end{thm}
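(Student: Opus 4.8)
The plan is to establish the result by a global counting argument rather than by invoking Theorem~\ref{thm1} (whose bound involves $\Delta^{+mon}(T_n)$, which does not appear in the present statement): I would compare the total number of directed triangles of $T_n$ with an upper bound on the number of those that are \emph{not} rainbow. Write $\tau$ for the number of directed triangles (directed $3$-cycles) of $T_n$ and recall the classical identity
$$\tau=\binom{n}{3}-\sum_{v\in V(T_n)}\binom{d^+(v)}{2},$$
which holds because each transitive triple is counted exactly once, through its unique source vertex, while each cyclic triple is counted not at all.

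To bound $\tau$ from below I would set $d^+(v)=\tfrac{n-1}{2}+e_v$. From $\sum_v d^+(v)=\binom{n}{2}$ we get $\sum_v e_v=0$, and the irregularity bound $|d^+(v)-d^-(v)|\le i(T_n)$ reads $|e_v|\le \tfrac{i(T_n)}{2}$. A short computation turns the identity into
$$\tau=\frac{n(n^2-1)}{24}-\frac12\sum_{v}e_v^2,$$
so the task reduces to maximizing $\sum_v e_v^2$ subject to $\sum_v e_v=0$ and $|e_v|\le \tfrac{i(T_n)}{2}$. Since $\sum_v e_v^2$ is convex, its maximum is attained with as many $e_v$ as possible pushed to $\pm\tfrac{i(T_n)}{2}$, and here the parity of $n$ is decisive: for $n$ even (which forces $i(T_n)$ odd) the values can be split evenly, giving $\sum_v e_v^2\le \tfrac{n\,i(T_n)^2}{4}$, whereas for $n$ odd (which forces $i(T_n)$ even) one vertex must remain at $0$, giving the smaller bound $\sum_v e_v^2\le \tfrac{(n-1)\,i(T_n)^2}{4}$. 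Substituting yields
$$\tau\ge\frac{(n-1)\bigl(n^2+n-3i(T_n)^2\bigr)}{24}\ (n\ \text{odd}),\qquad \tau\ge\frac{n\bigl(n^2-1-3i(T_n)^2\bigr)}{24}\ (n\ \text{even}).$$

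Next I would bound the number of non-rainbow directed triangles. In a directed triangle any two of its three arcs meet at exactly one vertex, at which one is an in-arc and the other an out-arc; hence a non-rainbow directed triangle contains a monochromatic path $x\to w\to y$ of length two whose two arcs share a color. Writing $d^-_\alpha(v)$ and $d^+_\alpha(v)$ for the numbers of in-arcs and out-arcs of color $\alpha$ at $v$, the total number of such monochromatic paths is $\sum_v\sum_\alpha d^-_\alpha(v)d^+_\alpha(v)$; since distinct triangles determine distinct paths, the number of non-rainbow directed triangles is at most
$$\sum_v\sum_\alpha d^-_\alpha(v)d^+_\alpha(v)\le\Delta^{-mon}(T_n)\sum_v\sum_\alpha d^+_\alpha(v)=\Delta^{-mon}(T_n)\sum_v d^+(v)=\Delta^{-mon}(T_n)\,\frac{n(n-1)}{2},$$
using $d^-_\alpha(v)\le\Delta^{-mon}(T_n)$. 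Choosing $\Delta^{-mon}(T_n)$ rather than $\Delta^{+mon}(T_n)$ in this last estimate is precisely where the hypothesis $\Delta^{-mon}(T_n)\le\Delta^{+mon}(T_n)$ is used, and it explains why the final condition involves only $\Delta^{-mon}(T_n)$.

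Finally I would compare the two estimates: the number of rainbow triangles is at least the lower bound on $\tau$ minus this upper bound, and the hypotheses are exactly the inequalities placing $\Delta^{-mon}(T_n)\tfrac{n(n-1)}{2}$ strictly below the respective lower bounds on $\tau$; rearranging returns $\tfrac{n^2+n-3i(T_n)^2}{12n}$ for $n$ odd and $\tfrac{n^2-1-3i(T_n)^2}{12(n-1)}$ for $n$ even. As the number of rainbow triangles is an integer, positivity of this difference forces at least one. The only genuinely delicate step is the parity-sensitive maximization of $\sum_v e_v^2$; the asymmetry between the even and odd cases there is exactly what produces the two different thresholds, so the bookkeeping must track which parity of $n$ forces which parity of $i(T_n)$.
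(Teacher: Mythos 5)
Your proof is correct and takes essentially the same approach as the paper: a lower bound on the total number of directed triangles obtained by maximizing $\sum_{v}(d^{+}(v))^{2}$ subject to the irregularity constraint (your identity $\tau=\binom{n}{3}-\sum_{v}\binom{d^{+}(v)}{2}$ together with the shift $e_{v}=d^{+}(v)-\frac{n-1}{2}$ reproduces exactly the paper's Lemma~\ref{lemma3}, parity analysis included), combined with an upper bound on non-rainbow triangles via an injection into monochromatic directed paths of length two, which is precisely the paper's Lemma~\ref{lemma4}. One small correction: the estimate $d^{-}_{\alpha}(v)\le\Delta^{-mon}(T_{n})$ holds unconditionally, so the hypothesis $\Delta^{-mon}(T_{n})\le\Delta^{+mon}(T_{n})$ is not actually \emph{used} in that step; it is only the symmetry normalization under which stating the threshold in terms of $\Delta^{-mon}(T_{n})$ yields the stronger of the two symmetric statements.
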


\begin{remark}\label{rem3}
The bounds in Theorem \ref{thm2} may not be tight. However, for regular tournaments the best possible bound can not be larger than $\frac{n}{11}$ (See Section \ref{sec:3} for examples).
\end{remark}

We prove Theorems \ref{thm1}, \ref{thm1+} and \ref{thm2} in Section 2. In Section 3, we give some examples to analyze the tightness of the bounds in Theorems \ref{thm1+} and \ref{thm2}. In Section 4, we propose some further research problems.

\section{The proofs}

Let $v$ be a vertex in an arc-colored tournament $T_n$. We use $n(\overrightarrow{C_{3}}, v)$ to denote the number of triangles containing $v$, and $n(R\overrightarrow{C_{3}}, v)$ and $n(NR\overrightarrow{C_{3}}, v)$ to denote the number of rainbow and non-rainbow triangles containing $v$, respectively. Then $$n(R\overrightarrow{C_{3}}, v)=n(\overrightarrow{C_{3}}, v) - n(NR\overrightarrow{C_{3}}, v).$$
%We will find the lower bound for $n(\overrightarrow{C_{3}}, v)$ and the upper bound for $n(NR\overrightarrow{C_{3}}, v)$ to get the lower bound for $n(R\overrightarrow{C_{3}}, v)$. The following lemma gives the lower bound for $n(\overrightarrow{C_{3}}, v)$.
So by estimating the minimum number of triangles and the maximum number of non-rainbow triangles passing through the vertex $v$, we will get a lower bound for $n(\overrightarrow{C_{3}}, v)$.

\begin{lem}\label{lemma1}
Let $T_{n}$ be a strongly connected tournament of order $n$ and irregularity $i(T_n)$. Then for each vertex $v$ of $T_n$, we have
$$n(\overrightarrow{C_{3}}, v)\geq\frac{\delta(v)(n-\delta(v)-i(T_n))}{2}.$$
\end{lem}

\begin{proof}
Let $v$ be a vertex of $T_n$. Let $W=N^{+}(v)$ and $U=N^{-}(v)$. Then $V(T_n)=W\cup U\cup \{v\}$ and
%there is a natural correspondence between $A(W, U)$ and the set of directed triangles containing $v$. So
$n(\overrightarrow{C_{3}}, v)=|A(W, U)|$. By the definition of irregularity, we have
 $$-|W|i(T_n)\leq\sum_{w\in W}d^+(w)-\sum_{w\in W}d^-(w)=|A(W,U)|-|A(U,W)|-|W|\leq |W|i(T_n).$$
 Note that
 $$
 |A(W,U)|+|A(U,W)|=|W||U|=|W|(n-1-|W|).
 $$
We have
\begin{equation}\label{eq1}
  |A(W,U)|\geq\frac{|W|(n-|W|-i(T_n))}{2}=\frac{d^+(v)(n-d^+(v)-i(T_n))}{2}.
\end{equation}
Similarly, since
$$
-|U|i(T_n)\leq \sum_{u\in U}d^+(u)-\sum_{u\in U}d^-(u)=|A(U, W)|+|U|-|A(W, U)|\leq |U|i(T_n),
$$
we have
\begin{equation}\label{eq2}
 |A(W,U)|\geq\frac{(n-1-|W|)(|W|+1-i(T_n))}{2}=\frac{d^-(v)(n-d^-(v)-i(T_n))}{2}.
\end{equation}
Combining Inequalities (\ref{eq1}) and (\ref{eq2}), we get
%$$
% |A(W,U)|\geq\max\{\frac{|W|(n-|W|-i(T_n))}{2}, \frac{(n-1-|W|)(|W|+1-i(T_n))}{2}\}.
%$$
$$
 |A(W,U)|\geq\frac{1}{2}\max\left\{{d^+(v)(n-d^+(v)-i(T_n))}, {d^-(v)(n-d^-(v)-i(T_n))}\right\}.
$$
%Note that
%$$|W|(n-|W|-i(T_n))-(n-1-|W|)(|W|+1-i(T_n))=(i(T_n)-1)(n-1-2|W|).$$
%\begin{equation*}
%\begin{aligned}
% &|W|(n-|W|-i(T_n))-(n-1-|W|)(|W|+1-i(T_n))\\
%=&|W|(n-|W|)-|W|i(T_n)-(n-|W|)|W|+|W|-(n-1)(1-i(T_n))+|W|(1-i(T_n))\\
%=&(i(T_n)-1)(n-1-2|W|).
%\end{aligned}
%\end{equation*}
%\begin{equation*}
%\begin{aligned}
% &\frac{d^+(v)(n-d^+(v)-i(T_n))}{2}-\frac{d^-(v)(n-d^-(v)-i(T_n))}{2}\\
%=&|W|(n-|W|)-|W|i(T_n)-(n-|W|)|W|+|W|-(n-1)(1-i(T_n))+|W|(1-i(T_n))\\
%=&(i(T_n)-1)(n-1-2|W|).
%\end{aligned}
%\end{equation*}
By easy calculation, we can get
$$
n(\overrightarrow{C_{3}}, v)=|A(W, U)|\geq\frac{\delta(v)(n-\delta(v)-i(T_n))}{2}.
$$
This completes the proof of Lemma \ref{lemma1}.
\end{proof}

Next we will estimate the maximum number of non-rainbow triangles passing through a given vertex $v$. A triangle with vertex set $\{v, w, u\}$ and arc set $\{vw, wu, uv\}$ is denoted by $T_{vwu}$.
\begin{lem}\label{lemma2}
Let $T_{n}$ be a strongly connected arc-colored tournament of order $n$. Then for each vertex $v$ of $T_n$, we have
$$
n(NR\overrightarrow{C_{3}}, v)\leq\Delta^{-mon}(T_{n})(n-1)+\Delta^{+mon}(T_{n})d^+(v).
$$
\end{lem}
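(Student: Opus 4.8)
The plan is to count the non-rainbow triangles $T_{vwu}$ through $v$ by classifying which pair of arcs fails to be rainbow, and then to bound each class by attributing the color-repetition to a fixed vertex where a monochromatic in- or out-degree bound applies. A triangle $T_{vwu}$ with $w \in N^+(v)$ and $u \in N^-(v)$ has three arcs $vw$, $wu$, $uv$; it is non-rainbow exactly when at least one of the following holds: $C(vw)=C(wu)$, $C(wu)=C(uv)$, or $C(vw)=C(uv)$. I would bound the number of triangles satisfying each of these three events separately, so that the total $n(NR\overrightarrow{C_3}, v)$ is at most the sum of the three counts.

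The key observation is that each event pins a monochromatic repetition at a specific vertex. For the event $C(vw)=C(uv)$, both arcs are incident to $v$ but one is an out-arc and one is an in-arc; here I would count the number of color-matched pairs $(vw, uv)$ at $v$ itself. For the event $C(wu)=C(uv)$, both $wu$ and $uv$ are in-arcs at $u$ sharing a color, so for a fixed $u \in N^-(v)$ the number of such $w$ is at most the number of in-arcs at $u$ of the color $C(uv)$, which is at most $\Delta^{-mon}(T_n)$; summing over the at most $n-1$ relevant vertices gives a bound governed by $\Delta^{-mon}(T_n)(n-1)$. For the event $C(vw)=C(wu)$, both $vw$ and $wu$ are out-arcs at $w$ sharing a color, so for a fixed $w \in N^+(v)$ the number of such $u$ is at most $\Delta^{+mon}(T_n)$; summing over the $d^+(v)$ vertices of $N^+(v)$ yields the term $\Delta^{+mon}(T_n)d^+(v)$. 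Matching these three contributions to the two terms in the stated bound is the crux, and the assumption $\Delta^{-mon}(T_n)\le\Delta^{+mon}(T_n)$ together with the choice of which vertex to charge each event to is what makes the arithmetic close.

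The main obstacle will be avoiding double-counting and, more subtly, correctly assigning the event $C(vw)=C(uv)$ (the repetition at $v$) to one of the two terms without inflating the bound. The natural route is to absorb this ``at $v$'' event into the $\Delta^{-mon}(T_n)(n-1)$ term: the arc $uv$ is an in-arc at $v$, and for each out-color on an arc $vw$ the number of in-arcs $uv$ at $v$ of that same color is at most $\Delta^{-mon}(T_n)$, while $w$ ranges over at most $n-1$ choices, so this contributes within the $\Delta^{-mon}(T_n)(n-1)$ budget already allocated for the $C(wu)=C(uv)$ event. I would therefore need to verify that these two ``in-color'' events can be simultaneously charged against the single term $\Delta^{-mon}(T_n)(n-1)$ — equivalently, that summing the monochromatic in-degree bound $\Delta^{-mon}(T_n)$ over the appropriate vertex set (the in-neighbors of $v$ together with $v$) stays within $(n-1)\Delta^{-mon}(T_n)$. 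Once this accounting is set up carefully, the three bounds add to exactly $\Delta^{-mon}(T_n)(n-1)+\Delta^{+mon}(T_n)d^+(v)$, completing the proof.
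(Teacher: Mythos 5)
Your proposal is correct and follows essentially the same route as the paper's proof: the same three-way classification of non-rainbow triangles through $v$, with the event $C(vw)=C(wu)$ charged to monochromatic out-arcs at $w$ (giving $\Delta^{+mon}(T_{n})d^{+}(v)$), the event $C(wu)=C(uv)$ charged to monochromatic in-arcs at $u$ (giving $\Delta^{-mon}(T_{n})d^{-}(v)$), and the event $C(vw)=C(uv)$ charged to monochromatic in-arcs at $v$, one batch of at most $\Delta^{-mon}(T_{n})$ per out-neighbor $w$ (giving $\Delta^{-mon}(T_{n})d^{+}(v)$), so the accounting you were worried about does close exactly because $d^{+}(v)+d^{-}(v)=n-1$. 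A few cosmetic slips --- $uv$ is an out-arc, not an in-arc, of $u$; $vw$ is an in-arc, not an out-arc, of $w$; and the hypothesis $\Delta^{-mon}(T_{n})\le\Delta^{+mon}(T_{n})$ is never actually needed --- do not affect the counts you use.
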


\begin{proof}
%	Before delivering the result, we first introduce some necessary notations.
Let $v$ be a vertex of $T_{n}$ and set $W=N^{+}(v)$ and $U=N^{-}(v)$. Define $W_{i}=\{w\in W|C(vw)=i\}$ and $U_{j}=\{u\in U|C(uv)=j\}$. Then $\sum_{i\in C(T_n)}|W_{i}|=d^{+}(v)$ and $\sum_{j\in C(T_n)}|U_{j}|=d^{-}(v)$. For a triangle $T_{vwu}$, if it is not rainbow, then it must belong to at least one of the sets $S_{1}(v)=\{T_{vwu}|C(vw)=C(wu)\}$, $S_{2}(v)=\{T_{vwu}|C(wu)=C(uv)\}$ and $S_{3}(v)=\{T_{vwu}|C(uv)=C(vw)\}$. Denote the cardinality of $S_{i}(v)$ by $t_{i}(v)$, $i=1, 2, 3$. Then
$$n(NR\overrightarrow{C_{3}}, v)\leq t_{1}(v)+t_{2}(v)+t_{3}(v).$$
Let $X_i=\{wu\in A(W_{i},U): C(wu)=i\}$. Then we have
$$t_{1}(v)=\sum_{i\in C(T_n)}|X_i|\leq\sum_{i\in C(T_n)} |W_{i}|\Delta^{+mon}(T_{n})=\Delta^{+mon}(T_{n})d^+(v).$$
%For each $W_{i}$, since $C(v, W_{i})=i$, there is a natural correspondence between the set $\{wu\in A(W_{i},U): C(wu)=i\}$ and the set $X_{i}=\{T_{vwu}:C(vw)=C(wu)=i\}$.
%Since $A_{i}$ can supply at most $|A_{i}|\Delta^{+mon}(T_{n})$ out-arcs of color $i$ and $U$ can receive at most $d^{-}(v)\Delta^{-mon}(T_{n})$ in-arcs of color $i$,
%Since there are at most $|A_{i}|\Delta^{+mon}(T_{n})$ out-arcs of $A_{i}$ colored by $i$ and there are at most $d^{-}(v)\Delta^{-mon}(T_{n})$ in-arcs of $U$ colored by $i$,
Let $Y_j=\{wu\in A(W,U_j): C(wu)=j\}$. Then we have
%$$
%t_{2}(v)\leq\sum_{j\in N^{-c}(v)}\min\{|U_{j}|\Delta^{-mon}(T_{n}), d^{+}(v)\Delta^{+mon}(T_{n})\}.
%$$
$$
t_{2}(v)=\sum_{j\in C(T_n)}|Y_j|\leq\sum_{j\in C(T_n)} |U_{j}|\Delta^{-mon}(T_{n})=\Delta^{-mon}(T_{n})d^-(v).
$$
It is not hard to see that
$$t_{3}(v)\leq\sum_{k\in C(T_{n})}|W_{k}||U_{k}|.$$
Note that %$|W_{i}|\leq\Delta^{+mon}(T_{n})$ and
$|U_{k}|\leq\Delta^{-mon}(T_{n})$ for all $k\in C(T_{n})$. We have
%Note that for $k \notin N^{+c}(v)$ (resp. $k \notin N^{-c}(v)$), we have $|W_{k}|=0$ (resp. $|U_{k}|=0$). We can see that $t_{3}(v)=\sum_{k=1}^{c(T_{n})}|A_{k}||B_{k}|$ and $t_{4}(v)\geq 0$. Since every vertex in $A_{k}$ is dominated by $v$ and all arcs from $v$ to $A_{k}$ are of color $k$, we have $|A_{k}|\leq\Delta^{+mon}(T_{n})$. Hence,
%$$
%\sum_{k\in C(T_{n})}|W_{k}||U_{k}|\leq\Delta^{+mon}(T_{n})\sum_{k\in C(T_{n})}|U_{k}|=\Delta^{+mon}(T_{n})d^{-}(v)
%$$
%Similarly, since every vertex in $B_{k}$ dominates $v$ and all arcs from $B_{k}$ to $v$ are of color $k$, we have $|B_{k}|\leq\Delta^{-mon}(T_{n})$. Hence,
%and
$$
\sum_{k\in C(T_{n})}|W_{k}||U_{k}|\leq\Delta^{-mon}(T_{n})\sum_{k\in C(T_{n})}|W_{k}|=\Delta^{-mon}(T_{n})d^{+}(v).
$$
Then
\begin{equation*}
\begin{aligned}
n(NR\overrightarrow{C_{3}}, v)\leq& t_{1}(v)+t_{2}(v)+t_{3}(v)\\
%\leq&\sum_{i\in N^{+c}(v)}\min\{|W_{i}|\Delta^{+mon}(T_{n}), d^{-}(v)\Delta^{-mon}(T_{n})\}\\
%+\sum_{j\in N^{-c}(v)}\min\{|U_{j}|\Delta^{-mon}(T_{n}), d^{+}(v)\Delta^{+mon}(T_{n})\}+\sum_{k=1}^{c(T_{n})}|W_{k}||U_{k}|\\
%\leq&\Delta^{+mon}(T_{n})d^{+}(v)+\Delta^{-mon}(T_{n})d^{-}(v)+\min\{\Delta^{+mon}(T_{n})d^{-}(v),\Delta^{-mon}(T_{n})d^{+}(v)\}\\
\leq&\Delta^{+mon}(T_{n})d^{+}(v)+\Delta^{-mon}(T_{n})d^{-}(v)+\Delta^{-mon}(T_{n})d^{+}(v)\\
=&\Delta^{-mon}(T_{n})(n-1)+\Delta^{+mon}(T_{n})d^+(v).
\end{aligned}
\end{equation*}
This completes the proof of Lemma \ref{lemma2}.
\end{proof}

%Now we prove Theorem \ref{thm1} by Lemmas \ref{lemma1} and \ref{lemma2}.

\begin{proof}[\bf Proof of Theorem \ref{thm1}]
%By Lemmas \ref{lemma1} and \ref{lemma2}, we have
%$$n(\overrightarrow{C_{3}}, v)\geq\frac{\delta(v)(n-\delta(v)-i(T_n))}{2}$$
%and
%$$
%n(NR\overrightarrow{C_{3}}, v)\leq\Delta^{-mon}(T_{n})(n-1)+\Delta^{+mon}(T_{n})d^+(v).
%$$
%Since $n(R\overrightarrow{C_{3}}, v)=n(\overrightarrow{C_{3}}, v)-n(NR\overrightarrow{C_{3}}, v)$, we have
%$$n(R\overrightarrow{C_{3}}, v)\geq\frac{\delta(v)(n-\delta(v)-i(T_n))}{2}-[\Delta^{-mon}(T_{n})(n-1)+\Delta^{+mon}(T_{n})d^+(v)].$$
Since $n(R\overrightarrow{C_{3}}, v)=n(\overrightarrow{C_{3}}, v)-n(NR\overrightarrow{C_{3}}, v)$, Theorem \ref{thm1} follows immediately from  Lemmas \ref{lemma1} and \ref{lemma2}.
\end{proof}

\setcounter{claim}{0}
Note that if $n(R\overrightarrow{C_{3}},v)>0$, then $v$ is contained in a rainbow triangle.

\noindent\textbf{Proof of Theorem \ref{thm1+}.}
Let $v$ be a vertex of $T_n$. The lower bound of $n(\overrightarrow{C_{3}}, v)$ in Lemma \ref{lemma1} is related to $\delta(v)$. Now, we will give a new lower bound of $n(\overrightarrow{C_{3}},v)$ without using $\delta(v)$.
\begin{claim}\label{claim1}
% If $i(T_n)\neq 1$, then $$n(\overrightarrow{C_{3}}, v)\geq\frac{(n-1)(n+1-2i(T_n))}{8}$$ for each vertex $v$. If $i(T_n)=1$, then $$n(\overrightarrow{C_{3}}, v)\geq\frac{n(n-2)}{8}$$ for each vertex $v$.
\begin{equation*}
n(\overrightarrow{C_{3}}, v)\geq\left\{
\begin{aligned}
&\frac{n(n-2)}{8}, &\text{if~}i(T_{n})=1; \\
&\frac{(n-1)(n+1-2i(T_{n}))}{8}, &\text{otherwise}.\\
\end{aligned}
\right.
\end{equation*}
\end{claim}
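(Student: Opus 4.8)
The plan is to start from Lemma \ref{lemma1}, which already gives $n(\overrightarrow{C_{3}}, v)\geq\frac{\delta(v)(n-\delta(v)-i(T_n))}{2}$, and to eliminate the dependence on $\delta(v)$ by minimizing the right-hand side over the admissible range of $\delta(v)$. The key observation is that $\delta(v)=\min\{d^+(v),d^-(v)\}$, and since $d^+(v)+d^-(v)=n-1$ with $|d^+(v)-d^-(v)|\leq i(T_n)$, the value $\delta(v)$ is confined to the interval $\left[\frac{n-1-i(T_n)}{2},\frac{n-1}{2}\right]$. So I would treat the bound in Lemma \ref{lemma1} as a function $f(\delta)=\frac{\delta(n-\delta-i(T_n))}{2}$ and find its minimum over this interval.

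Next I would analyze $f(\delta)=\frac{1}{2}\bigl(\delta(n-i(T_n))-\delta^2\bigr)$, a downward-opening parabola in $\delta$ with vertex at $\delta=\frac{n-i(T_n)}{2}$. The minimum over a closed interval of a concave function is attained at an endpoint, so I would compare $f$ at the two endpoints $\delta=\frac{n-1-i(T_n)}{2}$ and $\delta=\frac{n-1}{2}$. Evaluating gives $f\!\left(\frac{n-1}{2}\right)=\frac{(n-1)(n+1-2i(T_n))}{8}$ and $f\!\left(\frac{n-1-i(T_n)}{2}\right)=\frac{(n-1-i(T_n))(n-1+i(T_n))}{8}=\frac{(n-1)^2-i(T_n)^2}{8}$. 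Comparing the two, one checks that the right endpoint value $\frac{(n-1)(n+1-2i(T_n))}{8}$ is the smaller one precisely when $i(T_n)\geq 1$ (with an integrality subtlety when $i(T_n)=1$), which matches the ``otherwise'' branch of the claim; the special treatment of $i(T_n)=1$ accounts for parity of $n$ forcing $\delta(v)$ away from the continuous endpoint, yielding the sharper $\frac{n(n-2)}{8}$.

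The main obstacle I expect is handling the integrality and parity constraints carefully: $\delta(v)$ is an integer, and the endpoints of its range are $\frac{n-1-i(T_n)}{2}$ and $\frac{n-1}{2}$, which are integers only for appropriate parities of $n$ and $i(T_n)$. In particular, when $i(T_n)=1$ the quantity $\frac{n-1}{2}$ need not be an integer, so the actual admissible minimum of $\delta(v)$ differs from the continuous endpoint, and plugging the correct integer value is what produces the improved expression $\frac{n(n-2)}{8}$ rather than a naive substitution. I would therefore split into the case $i(T_n)=1$, where I substitute the true integer extremes of $\delta(v)$ determined by the parity of $n$, and the case $i(T_n)\neq 1$, where the endpoint comparison above suffices directly. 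Throughout, the monotonicity of the concave parabola on the relevant subinterval guarantees that evaluating at the extreme feasible integer value of $\delta(v)$ gives the stated lower bound.
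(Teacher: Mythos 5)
Your overall strategy is exactly the paper's: view the bound of Lemma \ref{lemma1} as the concave function $f(\delta)=\frac{\delta(n-\delta-i(T_n))}{2}$ on the interval $\left[\frac{n-1-i(T_n)}{2},\frac{n-1}{2}\right]$ and compare the two endpoint values. However, your execution contains an arithmetic error that breaks the comparison. At $\delta=\frac{n-1-i(T_n)}{2}$ we have $n-\delta-i(T_n)=\frac{n+1-i(T_n)}{2}$, not $\frac{n-1+i(T_n)}{2}$, so the left endpoint value is
\[
f\!\left(\tfrac{n-1-i(T_n)}{2}\right)=\frac{\bigl(n-1-i(T_n)\bigr)\bigl(n+1-i(T_n)\bigr)}{8}=\frac{\bigl(n-i(T_n)\bigr)^2-1}{8},
\]
not $\frac{(n-1)^2-i(T_n)^2}{8}$ as you wrote. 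With the correct values,
\[
f\!\left(\tfrac{n-1-i(T_n)}{2}\right)-f\!\left(\tfrac{n-1}{2}\right)=\frac{i(T_n)\bigl(i(T_n)-2\bigr)}{8},
\]
so the right endpoint is the minimum precisely when $i(T_n)=0$ (where the two endpoints coincide, the interval being a single point) or $i(T_n)\geq 2$, while for $i(T_n)=1$ the \emph{left} endpoint is strictly smaller. Your assertion that the right endpoint is smaller ``precisely when $i(T_n)\geq 1$'' is therefore false, and with your formula the subcase $i(T_n)=0$ of the ``otherwise'' branch fails outright: your comparison would select the (incorrectly computed) left value $\frac{(n-1)^2}{8}$, which is strictly weaker than the claimed $\frac{(n-1)(n+1)}{8}$.

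The same error inverts your reading of the $i(T_n)=1$ case. The value $\frac{n(n-2)}{8}$ is not a ``sharper'' bound extracted from integrality; it is the \emph{weaker} bound (note $n(n-2)<(n-1)^2$), and it appears in the claim because for $i(T_n)=1$ the concave function attains its minimum at the left endpoint, $f\!\left(\tfrac{n-2}{2}\right)=\frac{n(n-2)}{8}$. No parity argument is needed: the plain endpoint comparison already yields exactly the claimed bound in this case. Your parity observation (that $i(T_n)=1$ forces $n$ even, hence $\delta(v)=\frac{n-2}{2}$) is correct and gives the same number, but only because that forced integer value happens to be the minimizing endpoint; it is a detour rather than the source of the case distinction. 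Once the endpoint values are computed correctly, the whole case split in the claim falls out of the sign of $i(T_n)\bigl(i(T_n)-2\bigr)$, which is precisely the paper's proof.
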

\begin{proof}
Let $f(\delta(v))=\frac{\delta(v)(n-\delta(v)-i(T_n))}{2}$. Since $f'(\delta(v))=\frac{n-2\delta(v)-i(T_n)}{2}$, we can see that $f(\delta(v))$ increases when $\delta(v)\leq\frac{n-i(T_n)}{2}$ and decreases when $\delta(v)\geq\frac{n-i(T_n)}{2}$. Note that $\frac{n-1-i(T_n)}{2}\leq \delta(v)\leq \frac{n-1}{2}$. So the minimum value of $f(\delta(v))$ can only be obtained when $\delta(v)=\frac{n-1-i(T_n)}{2}$ or $\frac{n-1}{2}$.
%Note that $\delta(v)=\min\{d^+(v), d^-(v)\}$ and $\min\{\delta^+(T_n),\delta^-(T_n)\}\geq\frac{n-1-i(T_n)}{2}$. So $\frac{n-1-i(T_n)}{2}\leq \delta(v)\leq \frac{n-1}{2}$.
Comparing $f(\frac{n-1-i(T_n)}{2})$ and $f(\frac{n-1}{2})$, we can prove Claim \ref{claim1}.
\end{proof}

We divide the rest of the proof into three cases according to the irregularity of $T_n$.

\begin{case}
	$i(T_n)=0$.
\end{case}
In this case, $T_n$ is a regular tournament. Since  $2\Delta^{-mon}(T_{n})+\Delta^{+mon}(T_{n})\leq\frac{n}{4}$, by Lemma \ref{lemma2} and Claim \ref{claim1}, we have
\begin{equation*}
\begin{aligned}
n(NR\overrightarrow{C_{3}}, v)\leq\Delta^{-mon}(T_{n})(n-1)+\Delta^{+mon}(T_{n})d^+(v)\leq\frac{n(n-1)}{8}<\frac{n^{2}-1}{8}\leq n(\overrightarrow{C_{3}}, v).
\end{aligned}
\end{equation*}
Thus, $n(R\overrightarrow{C_{3}},v)>0$, namely, $v$ is contained in a rainbow triangle.
\begin{case}
	$\frac{n+3}{3}\leq i(T_n)\leq n-3$.
\end{case}
If  $d^+(v)<\frac{n-1}{2}$, then by Lemma \ref{lemma2} we have
\begin{equation*}
\begin{aligned}
n(NR\overrightarrow{C_{3}}, v)&\leq\Delta^{-mon}(T_{n})(n-1)+\Delta^{+mon}(T_{n})d^+(v)\\
&\leq\frac{(n-1)(n-2i(T_n))}{8}\\
&<\frac{(n-1)(n+1-2i(T_n))}{8}\leq n(\overrightarrow{C_{3}}, v).
\end{aligned}
\end{equation*}
%$$
%n(NR\overrightarrow{C_{3}}, v)< \frac{(n-1)(n+1-2i(T_n))}{8}\leq n(\overrightarrow{C_{3}}, v).
%$$
So $v$ is contained in a rainbow triangle.

If $d^+(v)\geq\frac{n-1}{2}$, then $d^+(v)=n-1-\delta(v)$. By Lemma \ref{lemma2} we have
\begin{equation*}
\begin{aligned}
n(NR\overrightarrow{C_{3}}, v)&\leq\Delta^{-mon}(T_{n})(n-1)+\Delta^{+mon}(T_{n})d^+(v)\\
&\leq d^+(v)(2\Delta^{-mon}(T_{n})+\Delta^{+mon}(T_{n})).
\end{aligned}
\end{equation*}
Now we will show $$d^+(v)(2\Delta^{-mon}(T_{n})+\Delta^{+mon}(T_{n}))<\frac{\delta(v)(n-\delta(v)-i(T_{n}))}{2}.$$ It suffices to prove that $$\frac{\delta(v)(n-\delta(v)-i(T_{n}))}{n-1-\delta(v)}> 4\Delta^{-mon}(T_{n})+2\Delta^{+mon}(T_{n}).$$
% then there is a rainbow triangle containing $v$.
Define $$g(\delta(v))=\frac{\delta(v)(n-\delta(v)-i(T_{n}))}{n-1-\delta(v)}.$$
 Since
\begin{equation*}
\begin{aligned}
g''(\delta(v))=-\frac{2(n-1)(i(T_{n})-1)}{(n-1-\delta(v))^3}<0,
\end{aligned}
\end{equation*}
we can see that $g(\delta(v))$ is convex when $\frac{n-1-i(T_{n})}{2}\leq \delta(v)\leq\frac{n-1}{2}$. So the minimum value of $g(\delta(v))$ can only be obtained when $\delta(v)=\frac{n-1-i(T_n)}{2}$ or $\frac{n-1}{2}$. Comparing $g(\frac{n-1-i(T_{n})}{2})$ and $g(\frac{n-1}{2})$, we have
$$
\min_{\frac{n-1-i(T_{n})}{2}\leq \delta(v)\leq\frac{n-1}{2}}g(\delta(v))=g(\frac{n-1}{2})=\frac{n+1-2i(T_{n})}{2},
$$
and thus
$$
2\Delta^{-mon}(T_{n})+\Delta^{+mon}(T_{n})\leq\frac{n-2i(T_{n})}{4}<\frac{n+1-2i(T_{n})}{4}\leq\frac{\delta(v)(n-\delta(v)-i(T_{n}))}{2(n-1-\delta(v))}.
$$
Therefore,
$$
n(NR\overrightarrow{C_{3}},v)\leq d^+(v)(2\Delta^{-mon}(T_{n})+\Delta^{+mon}(T_{n}))<\frac{\delta(v)(n-\delta(v)-i(T_{n}))}{2}\leq n(\overrightarrow{C_{3}},v).
$$
This implies that $v$ is contained in a rainbow triangle.

\begin{case}
	$1\leq i(T_{n})<\frac{n+3}{3}$.
\end{case}

The proof of Case 3 is similar to that of Case 2. Note that in this case we have
$$\frac{(n-1-i(T_{n}))(n+1-i(T_{n}))}{2(n-1+i(T_{n}))}<\frac{n+1-2i(T_{n})}{2}.$$

If $d^+(v)\leq\frac{n-1}{2}$, then we have
\begin{equation*}
\begin{aligned}
n(NR\overrightarrow{C_{3}}, v)&\leq\frac{n-1}{2}(2\Delta^{-mon}(T_{n})+\Delta^{+mon}(T_{n}))\\
&\leq\frac{n-1}{2}\frac{(n-1-i(T_{n}))(n+1-i(T_{n}))}{4(n-1+i(T_{n}))}\\
&<\frac{(n-1)(n+1-2i(T_n))}{8}\leq n(\overrightarrow{C_{3}}, v) \text{~(for~$1< i(T_n)< \frac{n+3}{3}$)}
\end{aligned}
\end{equation*}
and
\begin{equation*}
\begin{aligned}
n(NR\overrightarrow{C_{3}}, v)&\leq\frac{n-1}{2}(2\Delta^{-mon}(T_{n})+\Delta^{+mon}(T_{n}))\\
&\leq\frac{n-1}{2}\frac{(n-1-i(T_{n}))(n+1-i(T_{n}))}{4(n-1+i(T_{n}))}\\
&=\frac{(n-1)(n-2)}{8}<\frac{n(n-2)}{8}\leq n(\overrightarrow{C_{3}}, v) \text{~(for~$i(T_n)=1$)}.
\end{aligned}
\end{equation*}
This implies that $v$ is contained in a rainbow triangle.

If $d^+(v)>\frac{n-1}{2}$, then we have
$$n(NR\overrightarrow{C_{3}}, v)<d^+(v)(2\Delta^{-mon}(T_{n})+\Delta^{+mon}(T_{n})).$$
Since $i(T_{n})<\frac{n+3}{3}$, we have
$$
\min_{\frac{n-1-i(T_{n})}{2}\leq \delta(v)\leq\frac{n-1}{2}}g(\delta(v))=g(\frac{n-1-i(T_{n})}{2})=\frac{(n-1-i(T_{n}))(n+1-i(T_{n}))}{2(n-1+i(T_{n}))}
$$
and
$$
2\Delta^{-mon}(T_{n})+\Delta^{+mon}(T_{n})\leq\frac{(n-1-i(T_{n}))(n+1-i(T_{n}))}{4(n-1+i(T_{n}))}\leq\frac{\delta(v)(n-\delta(v)-i(T_{n}))}{2(n-1-\delta(v))}.
$$
Thus,
$$
n(NR\overrightarrow{C_{3}},v)<d^+(v)(2\Delta^{-mon}(T_{n})+\Delta^{+mon}(T_{n}))\leq\frac{\delta(v)(n-\delta(v)-i(T_{n}))}{2}\leq n(\overrightarrow{C_{3}},v).
$$
This implies that $v$ is contained in a rainbow triangle, completing the proof of Theorem \ref{thm1+}.    \qed

To prove Theorem \ref{thm2}, we will estimate the minimum number of triangles and the maximum number of non-rainbow triangles in $T_n$, respectively. We use $n(\overrightarrow{C_{3}})$, $n(R\overrightarrow{C_{3}})$ and $n(NR\overrightarrow{C_{3}})$ to denote the number of triangles, rainbow triangles and non-rainbow triangles in $T_n$, respectively. Here we give Lemmas \ref{lemma3} and \ref{lemma4}.

\begin{lem}\label{lemma3}
Let $T_{n}$ be a strongly connected tournament of order $n$ and irregularity $i(T_{n})$. If $n$ is odd, then
$$n(\overrightarrow{C_{3}})\geq\frac{(n-1)(n^{2}+n-3i(T_{n})^{2})}{24}.$$ If $n$ is even, then $$n(\overrightarrow{C_{3}})\geq\frac{n(n^{2}-1-3i(T_{n})^{2})}{24}.$$
\end{lem}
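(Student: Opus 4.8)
The plan is to use the classical identity expressing the number of cyclic triangles of a tournament in terms of its out-degree sequence, and then to control the resulting degree sum by the irregularity constraint. Every triple of vertices of $T_n$ spans either a cyclic triangle or a transitive triple, and each transitive triple is counted exactly once by its unique source vertex (the vertex dominating the other two), which contributes one pair to $\binom{d^+(v)}{2}$. Hence
\[ n(\overrightarrow{C_3}) = \binom{n}{3} - \sum_{v\in V(T_n)} \binom{d^+(v)}{2}. \]
As this identity holds for every tournament, a lower bound on $n(\overrightarrow{C_3})$ is equivalent to an upper bound on $\sum_v \binom{d^+(v)}{2}$.

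I would then linearize. Using $\sum_v d^+(v) = \binom{n}{2}$ we obtain
\[ \sum_{v}\binom{d^+(v)}{2} = \frac{1}{2}\sum_v d^+(v)^2 - \frac{n(n-1)}{4}, \]
so it remains to maximize $\sum_v d^+(v)^2$ over all out-degree sequences allowed by the hypotheses. Setting $x_v = d^+(v) - \frac{n-1}{2}$, the identity $d^+(v)+d^-(v)=n-1$ together with the definition of $i(T_n)$ gives $|x_v|\le \frac{i(T_n)}{2}$, while $\sum_v d^+(v)=\binom{n}{2}$ becomes $\sum_v x_v = 0$. Expanding yields $\sum_v d^+(v)^2 = n\left(\frac{n-1}{2}\right)^2 + \sum_v x_v^2$, reducing everything to the clean extremal problem of maximizing $\sum_v x_v^2$ subject to $\sum_v x_v = 0$ and $|x_v|\le \frac{i(T_n)}{2}$.

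For this extremal problem I would use $\sum_v x_v^2 \le \frac{i(T_n)}{2}\sum_v |x_v|$. Since $\sum_v x_v = 0$, the quantity $P=\sum_{x_v>0}x_v=\sum_{x_v<0}(-x_v)$ is well defined and $\sum_v|x_v|=2P$; writing $p$ and $q$ for the numbers of strictly positive and strictly negative $x_v$ gives $P\le \frac{i(T_n)}{2}\min\{p,q\}$. For $n$ even this already gives $\sum_v x_v^2\le n\left(\frac{i(T_n)}{2}\right)^2$. For $n$ odd, $p$ and $q$ are integers with $p+q\le n$, so $\min\{p,q\}\le \frac{n-1}{2}$, yielding the sharper bound $\sum_v x_v^2\le (n-1)\left(\frac{i(T_n)}{2}\right)^2$.

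Substituting these bounds back through the two displayed identities and simplifying (routine arithmetic; the case $i(T_n)=0$ recovers the known value $\frac{(n-1)n(n+1)}{24}$) produces exactly $\frac{(n-1)(n^2+n-3i(T_n)^2)}{24}$ for $n$ odd and $\frac{n(n^2-1-3i(T_n)^2)}{24}$ for $n$ even. I expect the odd case to be the main obstacle: the crude bound $|x_v|\le i(T_n)/2$ only gives $\sum_v x_v^2\le n(i(T_n)/2)^2$, which is too weak, and the needed improvement to $(n-1)(i(T_n)/2)^2$ rests on the parity obstruction that with $n$ odd one cannot place every out-degree at an extreme value while keeping $\sum_v x_v=0$ — encoded above in $\min\{p,q\}\le \frac{n-1}{2}$.
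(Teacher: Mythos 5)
Your proposal is correct, and its skeleton coincides with the paper's: both reduce the lemma to an upper bound on $\sum_{v}(d^{+}(v))^{2}$ subject to $\sum_{v}d^{+}(v)=\binom{n}{2}$ and $\frac{n-1-i(T_n)}{2}\le d^{+}(v)\le\frac{n-1+i(T_n)}{2}$; indeed your identity $n(\overrightarrow{C_{3}})=\binom{n}{3}-\sum_{v}\binom{d^{+}(v)}{2}$ is algebraically the same as the paper's expression $\frac{n(n-1)(2n-1)}{12}-\frac{1}{2}\sum_{v}(d^{+}(v))^{2}$, which the paper derives instead by summing $|A(N^{+}(v),N^{-}(v))|$ over all $v$ and double counting. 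Where you genuinely diverge is in solving the extremal problem. The paper runs a discrete exchange (smoothing) argument: at a maximizer no two out-degrees lie strictly between the bounds, which pins down the extremal sequence explicitly ($\frac{n}{2}$ vertices at each endpoint for $n$ even; $\frac{n-1}{2}$ at each endpoint plus one vertex at $\frac{n-1}{2}$ for $n$ odd), and then evaluates. You instead center the variables ($x_v=d^{+}(v)-\frac{n-1}{2}$, so $|x_v|\le\frac{i(T_n)}{2}$, $\sum_v x_v=0$) and bound $\sum_v x_v^{2}\le\frac{i(T_n)}{2}\sum_v|x_v|=i(T_n)P\le\frac{i(T_n)^{2}}{2}\min\{p,q\}$, finishing with the parity observation $\min\{p,q\}\le\frac{n-1}{2}$ when $n$ is odd; this recovers exactly the same maxima, $(n-1)\bigl(\frac{i(T_n)}{2}\bigr)^{2}$ and $n\bigl(\frac{i(T_n)}{2}\bigr)^{2}$, and the final arithmetic checks out in both parity cases. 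The trade-off: the paper's exchange argument exhibits the extremal degree sequence (so one sees exactly when equality holds), while your inequality-plus-parity route is shorter, avoids justifying the structure of the optimum, and isolates cleanly why the odd case gains a factor $(n-1)$ versus $n$ --- the impossibility of placing all $n$ out-degrees at the extremes while keeping $\sum_v x_v=0$.
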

\begin{proof}
Let $v$ be a vertex of $T_n$ and set $W=N^{+}(v)$ and $U=N^{-}(v)$. Then $n(\overrightarrow{C_{3}}, v)=|A(W, U)|$. We can partition the out-neighborhood of a vertex $w \in W$ into two parts. Let $P_{1}(w)=N^{+}(w)\cap W$ and $P_{2}(w)=N^{+}(w)\cap U$. Then $d^{+}(w)=|P_{1}(w)|+|P_{2}(w)|$. Thus,
\begin{equation*}
\begin{aligned}
|A(W, U)|=\sum_{w \in W}|P_{2}(w)|&=\sum_{w \in W}d^{+}(w)-\sum_{w \in W}|P_{1}(w)|\\
&=\sum_{w \in N^+(v)}d^{+}(w)-\frac{d^+(v)(d^+(v)-1)}{2}.
\end{aligned}
\end{equation*}
 Since a triangle contains three vertices, we have
$$
n(\overrightarrow{C_{3}})=\frac{1}{3}\sum_{v\in V(T_n)}n(\overrightarrow{C_{3}}, v)=\frac{1}{3}\sum_{v\in V(T_n)}\left(\sum_{w \in N^+(v)}d^{+}(w)-\frac{d^+(v)(d^+(v)-1)}{2}\right).
$$
For each vertex $w$, we can see that $w$ is an out-neighbor of exactly $d^-(w)$ vertices. So
\begin{equation*}
\begin{aligned}
&\sum_{v\in V(T_n)}\left(\sum_{w \in N^+(v)}d^{+}(w)-\frac{d^+(v)(d^+(v)-1)}{2}\right)\\
=&\sum_{v\in V(T_n)}\left(d^+(v)d^-(v)-\frac{(d^+(v))^2}{2}+\frac{d^+(v)}{2}\right)\\
=&\frac{n(n-1)(2n-1)}{4}-\frac{3}{2}\sum_{v\in V(T_n)}(d^{+}(v))^{2}.
\end{aligned}
\end{equation*}
It suffices to calculate
 $$ \max\sum_{i=1}^{n} (d^{+}(v_{i}))^{2}\qquad\qquad\qquad\qquad$$
 \begin{equation*}
 {\bf s.t.}\left\{
   \begin{aligned}
  &\frac{n-1-i(T_{n})}{2}\leq d^{+}(v_{i})\leq \frac{n-1+i(T_{n})}{2};\\
      &\sum_{i=1}^{n} d^{+}(v_{i})=\frac{n(n-1)}{2}.
   \end{aligned}
   \right.
  \end{equation*}
We claim that if $\sum_{i=1}^{n} (d^{+}(v_{i}))^{2}$ attains the maximum value then the number of vertices with outdegree $\frac{n-1+i(T_{n})}{2}$ is maximum, and subject to this, the number of vertices with outdegree $\frac{n-1-i(T_{n})}{2}$ is maximum. In other words, there exist no two vertices $x$, $y$ with $\frac{n-1-i(T_{n})}{2}<d^{+}(x)\leq d^{+}(y)<\frac{n-1+i(T_{n})}{2}$. Otherwise, we can get a larger $\sum_{i=1}^{n} (d^{+}(v_{i}))^{2}$ by changing $d^{+}(x)$, $d^{+}(y)$ to $d^{+}(x)-1$, $d^{+}(y)+1$.
If $n$ is odd, then
\begin{equation*}
\begin{aligned}
\max\sum_{v\in V(T_n)}(d^{+}(v))^{2}=&\frac{n-1}{2}\left(\frac{n-1+i(T_{n})}{2}\right)^{2}+\left(\frac{n-1}{2}\right)^{2}+\frac{n-1}{2}\left(\frac{n-1-i(T_{n})}{2}\right)^{2}\\
=&\frac{(n-1)\left[(n-1)^{2}+i(T_{n})^{2}+n-1\right]}{4}.
\end{aligned}
\end{equation*}
So we have
\begin{equation*}
\begin{aligned}
n(\overrightarrow{C_{3}})\geq\frac{(n-1)(n^{2}+n-3i(T_{n})^{2})}{24}.
\end{aligned}
\end{equation*}
If $n$ is even, then
\begin{equation*}
\begin{aligned}
\max\sum_{v\in V(T_n)}(d^{+}(v))^{2}=&\frac{n}{2}\left(\frac{n-1+i(T_{n})}{2}\right)^{2}+\frac{n}{2}\left(\frac{n-1-i(T_{n})}{2}\right)^{2}\\
=&\frac{n\left[(n-1)^{2}+i(T_{n})^{2}\right]}{4}.
\end{aligned}
\end{equation*}
So we have
\begin{equation*}
\begin{aligned}
n(\overrightarrow{C_{3}})\geq\frac{n(n^{2}-1-3i(T_{n})^{2})}{24},
\end{aligned}
\end{equation*}
completing the proof.
\end{proof}

\begin{lem}\label{lemma4}
Let $T_{n}$ be a strongly connected arc-colored tournament of order $n$. Then
$$n(NR\overrightarrow{C_{3}})\leq\frac{n(n-1)\Delta^{-mon}(T_n)}{2}.$$
\end{lem}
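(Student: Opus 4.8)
The plan is to reuse the per-vertex decomposition of non-rainbow triangles introduced in the proof of Lemma~\ref{lemma2}, but to assign each non-rainbow triangle to a single well-chosen vertex so that only $\Delta^{-mon}(T_n)$, and not $\Delta^{+mon}(T_n)$, enters the final count. The crude route of summing $n(NR\overrightarrow{C_{3}},v)\le\Delta^{-mon}(T_n)(n-1)+\Delta^{+mon}(T_n)d^+(v)$ over all $v$ and dividing by $3$ fails, because $\sum_{v}d^+(v)=\binom{n}{2}$ would force the larger quantity $\Delta^{+mon}(T_n)$ into the bound; a finer assignment is therefore needed.

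First I would observe that a directed triangle $T_{vwu}$ is non-rainbow exactly when at least one of its three pairs of arcs is monochromatic, and that each such pair consists of one in-arc and one out-arc meeting at a common vertex: the pair $\{uv,vw\}$ meets at $v$, the pair $\{vw,wu\}$ meets at $w$, and the pair $\{wu,uv\}$ meets at $u$, the shared vertex being the head of one arc and the tail of the other. In the notation of Lemma~\ref{lemma2}, the condition ``the in-arc and out-arc at $x$ have the same color'' is precisely the defining condition of the set $S_{3}(x)$. Hence every non-rainbow triangle belongs to $S_{3}(x)$ for at least one of its three vertices $x$, giving
$$n(NR\overrightarrow{C_{3}})\le\sum_{x\in V(T_n)}t_{3}(x).$$

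Next I would invoke the estimate $t_{3}(x)\le\Delta^{-mon}(T_n)\,d^+(x)$ already obtained inside the proof of Lemma~\ref{lemma2}: writing $W=N^+(x)$, $U=N^-(x)$, $W_k=\{w\in W:C(xw)=k\}$ and $U_k=\{u\in U:C(ux)=k\}$, one has $t_{3}(x)\le\sum_{k}|W_k||U_k|\le\Delta^{-mon}(T_n)\sum_{k}|W_k|=\Delta^{-mon}(T_n)\,d^+(x)$, using $|U_k|\le\Delta^{-mon}(T_n)$. Summing over all vertices and using $\sum_{x\in V(T_n)}d^+(x)=|A(T_n)|=\binom{n}{2}=\frac{n(n-1)}{2}$ yields
$$n(NR\overrightarrow{C_{3}})\le\Delta^{-mon}(T_n)\sum_{x\in V(T_n)}d^+(x)=\frac{n(n-1)\Delta^{-mon}(T_n)}{2},$$
which is the desired bound.

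The main obstacle is the bookkeeping in the first step: I must be sure that routing every non-rainbow triangle through its $S_{3}$-condition, rather than through $S_{1}$ or $S_{2}$, still captures all of them. The key point is that although for a fixed labelling $T_{vwu}$ the sets $S_{1}(v),S_{2}(v),S_{3}(v)$ describe three \emph{different} arc pairs, once each of the triangle's three vertices is allowed to play the role of ``center'', the $S_{3}$-condition alone recovers all three monochromatic pairs (one per vertex). This is exactly what lets me discard the $\Delta^{+mon}(T_n)$ contribution and retain only $\Delta^{-mon}(T_n)$. Since a single triangle may satisfy the $S_{3}$-condition at more than one vertex, the inequality $n(NR\overrightarrow{C_{3}})\le\sum_{x}t_{3}(x)$ is only one-directional, but that is all that is required, and strong connectivity plays no role in the argument.
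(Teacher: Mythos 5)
Your proof is correct and is essentially the paper's own argument: the paper routes each non-rainbow triangle to the monochromatic directed path of length $2$ formed by its two equal-colored arcs, whose center is exactly your vertex $x$ with the monochromatic in-arc/out-arc pair, and then applies the same bound $\sum_{k}|W_k||U_k|\leq\Delta^{-mon}(T_n)d^+(x)$ from the proof of Lemma~\ref{lemma2} and sums $\sum_{x}d^+(x)=\frac{n(n-1)}{2}$. The only cosmetic difference is that the paper phrases the first step as an injection into monochromatic paths while you phrase it as membership in $S_3(x)$ for some vertex of the triangle; both yield the same count.
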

\begin{proof}
Let $P$ be the set of all monochromatic directed path of length $2$ in $T_n$. For each non-rainbow triangle, there must be a monochromatic directed path of length $2$ in it. For each two distinct non-rainbow triangles, the corresponding monochromatic directed paths of length $2$ are also distinct. So we have $n(NR\overrightarrow{C_{3}})\leq |P|$. Let $v$ be a vertex of $T_{n}$. Let $W=N^{+}(v)$ and $U=N^{-}(v)$. Define $W_{i}=\{w\in W|C(vw)=i\}$ and $U_{j}=\{u\in U|C(uv)=j\}$. Then $\sum_{i\in C(T_n)}|W_{i}|=d^{+}(v)$ and $\sum_{j\in C(T_n)}|U_{j}|=d^{-}(v)$. The number of monochromatic directed paths of length $2$ which contain $v$ as the center is
$$\sum_{k\in C(T_{n})}|W_{k}||U_{k}|.$$
By the proof of Lemma \ref{lemma2}, we know that
$$\sum_{k\in C(T_{n})}|W_{k}||U_{k}|\leq\Delta^{-mon}(T_{n})d^{+}(v).$$
Thus,
$$
n(NR\overrightarrow{C_{3}})\leq|P|\leq\sum_{v\in V(T_n)}\Delta^{-mon}(T_{n})d^{+}(v)=\frac{n(n-1)\Delta^{-mon}(T_n)}{2}.
$$
The proof is complete.
\end{proof}

\noindent\textbf{Proof of Theorem \ref{thm2}.} If $n(NR\overrightarrow{C_{3}})<n(\overrightarrow{C_{3}})$, then there must be a rainbow triangle in $T_n$. So Theorem \ref{thm2} follows from Lemmas \ref{lemma3} and \ref{lemma4} immediately. \qed

\section{Tightness analysis of the bounds in Theorems \ref{thm1+} and \ref{thm2}}\label{sec:3}
In this section, we will give some examples to analyze the tightness of the bounds in Theorems \ref{thm1+} and \ref{thm2}. Examples 1, 2 and 3 are for Theorem \ref{thm1+}. Examples 4 and 5 are for Theorem \ref{thm2}.

Before we give the examples, we will prove an easy but useful result first.
\begin{thm}\label{thm3}
If $T_{n}$ is a regular tournament, then $T_{n}$ must be strongly connected.
\end{thm}
\begin{proof}
Let $T_{n}$ be a regular tournament. Then for every vertex $v$ of $T_{n}$, we have $d^{+}(v)=d^{-}(v)$. By contradiction, assume that $T_{n}$ is not strongly connected. Then the vertex set of $T_{n}$ can be partitioned into two
nonempty subsets $V_{1}$ and $V_{2}$, such that all arcs between $V_{1}$ and $V_{2}$ have the same direction. Without loss of generality, we can assume that $|A(V_{1},V_{2})|=|V_{1}||V_{2}|$ and $|A(V_{2},V_{1})|=0$. Since $T_{n}$ is regular, we have $$\sum_{v\in V_{1}}d^{+}(v)=\sum_{v\in V_{1}}d^{-}(v).$$
Note that $$\sum_{v\in V_{1}}d^{+}(v)=\sum_{v\in V_{1}}d^{+}_{T_{n}[V_{1}]}(v)+|A(V_{1},V_{2})|,$$
$$\sum_{v\in V_{1}}d^{-}(v)=\sum_{v\in V_{1}}d^{-}_{T_{n}[V_{1}]}(v)+|A(V_{2},V_{1})|$$
and $$\sum_{v\in V_{1}}d^{+}_{T_{n}[V_{1}]}(v)=\sum_{v\in V_{1}}d^{-}_{T_{n}[V_{1}]}(v).$$
So we have $|A(V_{1},V_{2})|=|A(V_{2},V_{1})|$, a contradiction.
\end{proof}

\noindent\textbf{Example 1.}\quad This example shows that the upper bound $\frac{n-2i(T_n)}{4}$ in Theorem \ref{thm1+} is tight, for $i(T_n)=0$ or $\frac{n+3}{3}\leq i(T_n)\leq \frac{n-3}{2}$.

We construct a tournament $T_n$ with $n=4m-1$ vertices. Let $v$ be a vertex of $T_n$ with $d^+(v)=d^-(v)=2m-1$, $W=N^{+}(v)=\{w_{1}, w_{2}, \ldots, w_{2m-1}\}$ and $U=N^{-}(v)=\{u_{1}, u_{2}, \ldots, u_{2m-1}\}$. Let $T_n[W]$ and $T_n[U]$ be two regular tournaments. For each vertex $w_{j}\in W$, let $N^{+}(w_{j})\cap U=\{u_{j}, \ldots, u_{j+k-1}\}$ (indices are taken modulo $2m-1$), where $1\leq k \leq m=\frac{n+1}{4}$. By Theorem \ref{thm3}, we can see that $T_n$ is strongly connected.

Now let us see the irregularity of $T_n$. For vertex $v$, we have $|d^+(v)-d^-(v)|=0$. For every vertex $w \in W$, we have $d^{+}(w)=m-1+k$ and $d^{-}(w)=m-1+1+2m-1-k=m-1+2m-k$. Since $k\leq m$, we have
$|d^+(w)-d^-(w)|=2m-2k$. Similarly, for every vertex $u \in U$, we have $d^{+}(u)=m-1+2m-k$, $d^{-}(u)=m-1+k$ and
$|d^+(u)-d^-(u)|=2m-2k$. So, $i(T_n)=2m-2k$. Since $1\leq k \leq m=\frac{n+1}{4}$, we have $0\leq i(T_n)\leq \frac{n-3}{2}$.

Next, we will assign colors to the arcs of $T_n$. For $1\leq j\leq 2m-1$, let $C(vw_{j})=j$, $C(u_{j}v)=j$, $C(w_{j}u_{j+1})=j+1$ and $C(w_{j}u_{p})=j$ for $j+2\leq p\leq j+k-1$. Finally, color the remaining arcs with distinct new colors. Then
$\Delta^{+mon}(T_n)=k-2$ and $\Delta^{-mon}(T_n)=1$. Thus, $$2\Delta^{-mon}(T_n)+\Delta^{+mon}(T_n)=k=\frac{2m-i(T_n)}{2}=\frac{n+1-2i(T_n)}{4},$$ but there is no rainbow triangle containing $v$.

\noindent\textbf{Example 2.} \quad This example shows that if $i(T_{n})\geq \frac{n-1}{2}$, then even the condition $\Delta^{+mon}(T_n)\\=\Delta^{-mon}(T_n)=1$ can not guarantee every vertex in $T_n$ is contained in a rainbow triangle.

We construct a tournament $T_n$ with $n$ vertices. Let $v$ be a vertex of $T_n$ with $d^+(v)=x$ and $d^-(v)=n-1-x$, where $\frac{n+1}{2}\leq x\leq n-2$, $x$ and $n-1-x$ are odd integers. Let $W=N^{+}(v)=\{w_{1}, w_{2}, \ldots, w_{x}\}$ and $U=N^{-}(v)=\{u_{1}, u_{2}, \ldots, u_{n-1-x}\}$. Let $T_n[W]$ and $T_n[U]$ be two regular tournaments. For a vertex $w_{j}\in W$, $1\leq j \leq n-1-x$, let $N^{+}(w_{j})\cap U=\{u_{j}\}$. For a vertex $w_{i}\in W$, $n-x\leq i \leq x$, let $N^{+}(w_{j})\cap U=\emptyset$. By Theorem \ref{thm3}, we can see that $T_n$ is strongly connected.

Now let us see the irregularity of $T_n$. For vertex $v$, since $\frac{n+1}{2}\leq x$, we have $|d^+(v)-d^-(v)|=2x-n+1$. For  vertex $w_{j}\in W$, $1\leq j \leq n-1-x$, we have $d^{+}(w_{j})=\frac{x-1}{2}+1$ and $d^{-}(w_{j})=\frac{x-1}{2}+1+n-1-x-1=\frac{x-1}{2}+n-x-1$. Since $x\leq n-2$, we have
$|d^+(w_{j})-d^-(w_{j})|=n-2-x$. Similarly, for  vertex $w_{i}\in W$, $n-x\leq i \leq x$, we have $d^{+}(w_{i})=\frac{x-1}{2}$, $d^{-}(w_{i})=\frac{x-1}{2}+n-x$ and
$|d^+(w_{i})-d^-(w_{i})|=n-x$. For every vertex $u \in U$, we have $d^{+}(u)= x+\frac{n-x-2}{2}$, $d^{-}(u)= 1+\frac{n-x-2}{2}$ and
$|d^+(u)-d^-(u)|=x-1$. So, $i(T_n)=\max\{2x-n+1, n-2-x, n-x, x-1\}$. Since $\frac{n+1}{2}\leq x\leq n-2$, we have $2x-n+1\leq x-1$ and $n-x\leq \frac{n-1}{2}\leq x-1$. So, $i(T_{n})=x-1\geq \frac{n-1}{2}$.

Next, we will assign colors to the arcs of $T_n$. For $1\leq j\leq x$ and $1\leq i\leq n-1-x$, let $C(vw_{j})=j$ and $C(u_{i}v)=i$. Finally, color the remaining arcs with distinct new colors. Then
$\Delta^{+mon}(T_n)=\Delta^{-mon}(T_n)=1$, but there is no rainbow triangle containing $v$.

\noindent\textbf{Example 3.} For $1\leq i(T_n)<\frac{n+3}{3}$, denote $\lfloor\frac{(n-1-i(T_n))(n+1-i(T_n))}{4(n-1+i(T_n))}\rfloor=m$. Let $T_{n}$ be a tournament of order $n=4k-1+i(T_n)$, where $k$ is a positive integer, $v$ be a vertex of $T_{n}$ with $d^+(v)=2k-1+i(T_n)$ and $d^-(v)=2k-1$, $W=N^{+}(v)=\{w_{1}, w_{2}, \ldots, w_{2k-1+i(T_n)}\}$ and $U=N^{-}(v)=\{u_{1}, u_{2}, \ldots, u_{2k-1}\}$. If $i(T_n)$ is even, then let $T_{n}[W]$ be a regular tournament, otherwise let $T_{n}[W]$ be an almost regular tournament. Let $T_{n}[U]$ be a regular tournament. Since $i(T_n)<\frac{n+3}{3}$, we have $3i(T_n)<n+3=4k+2+i(T_n)$, namely, $i(T_n)\leq 2k$.

\noindent\textbf{Case 1.} $i(T_n)\leq 2k-1$ and $(2k-1)(m+1)+i(T_n)m\geq k(2k-1)$.

We can construct a tournament with $2\Delta^{-mon}(T_{n})+\Delta^{+mon}(T_{n})=m+1$ to show the bound is tight. For $w_{j}$, $1\leq j\leq 2k-1$, let $N^{+}(w_{j})\cap U=\{u_{j}, \ldots, u_{j+m}\}$, and for $w_{2k-1+j}$, $1\leq j\leq i(T_n)$, let $N^{+}(w_{2k-1+j})\cap U=\{u_{j}, u_{j+m+1}, \ldots, u_{j+2m-1}\}$. Since $k\geq 1$ and $i(T_n)\geq 1$, we have
\begin{equation*}
\begin{aligned}
2m-1-(2k-1)=2m-2k&\leq\frac{(n-1-i(T_n))(n+1-i(T_n))}{2(n-1+i(T_n))}-2k\\
&=\frac{2k(4k-2)-2k(4k-2+2i(T_n))}{4k-2+2i(T_n)}\\
&=\frac{-4ki(T_n)}{4k-2+2i(T_n)}\\
&< 0.
\end{aligned}
\end{equation*}
Namely, $2m-1<2k-1$. So
$$
(N^{+}(w_{j})\cap U)\cap(N^{+}(w_{2k-1+j})\cap U)=\{u_{j}\},
$$
and elements in $\{u_{j}, u_{j+m+1}, \ldots, u_{j+2m-1}\}$ are pairwise distinct. We can see that $T_{n}$ is strongly connected.
Let $C(vw_{j})=j$, $C(u_{j}v)=j$, $C(w_{j}u_{j+1})=j+1$ and $C(w_{j}u_{p})=j$ for $1\leq j\leq 2k-1$ and $j+2\leq p\leq j+m$. Let $C(vw_{2k-1+j})=j$, and $C(w_{2k-1+j}u_{q})=j$ for $1\leq j\leq i(T_n)$ and $j+m+1\leq q\leq j+2m-1$. Finally, color the remaining arcs with distinct new colors (See Figure \ref{e3a}). Then
$\Delta^{+mon}(T_{n})=m-1$ and $\Delta^{-mon}(T_{n})=1$. Thus, $$2\Delta^{-mon}(T_{n})+\Delta^{+mon}(T_{n})=m+1=\lceil\frac{(n-1-i(T_n))(n+1-i(T_n))}{4(n-1+i(T_n))}\rceil,$$ but there is no rainbow triangle containing $v$ (indices are taken modulo $2k-1$). So the bound is tight in this case.

\begin{figure}
  \centering
  % Requires \usepackage{graphicx}
  \includegraphics[width=0.7\textwidth]{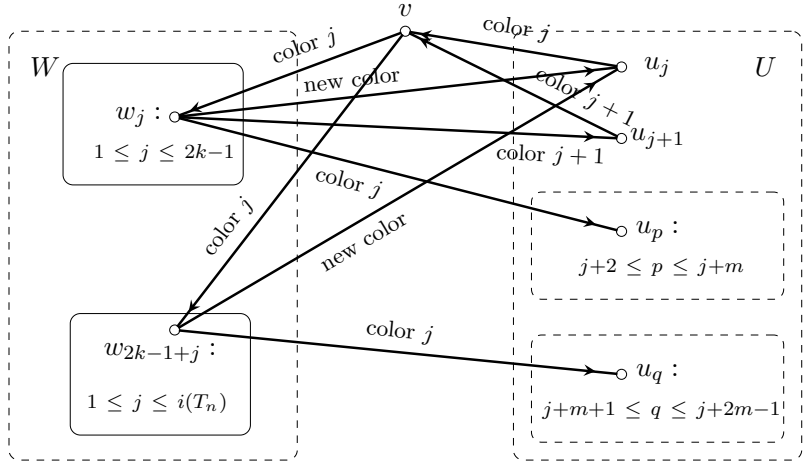}\\
  \caption{A digraph described in case 1 of Example 3.}\label{e3a}
\end{figure}

Note that if $i(T_n)=1$, then $\frac{(n-1-i(T_n))(n+1-i(T_n))}{4(n-1+i(T_n))}=\frac{n-2}{4}=\frac{4k-2}{4}$. Thus, $m=\lfloor\frac{4k-2}{4}\rfloor=k-1$ and
$(2k-1)(m+1)+i(T_n)m=k(2k-1)+k-1> k(2k-1)$.

\noindent\textbf{Case 2.} $i(T_n)= 2k$ or $(2k-1)(m+1)+i(T_n)m< k(2k-1)$.

By the above argument, in this case we have $i(T_n)\geq 2$.
We can construct a tournament with $2\Delta^{-mon}(T_{n})+\Delta^{+mon}(T_{n})=m+2$ and a vertex not contained in any rainbow triangles. Since $$\frac{(n-1-i(T_n))(n+1-i(T_n))}{4(n-1+i(T_n))}<m+1$$ and
\begin{equation*}
\begin{aligned}
(2k-1+i(T_n))\frac{(n-1-i(T_n))(n+1-i(T_n))}{4(n-1+i(T_n))}&=\frac{k(4k-2)(2k-1+i(T_n))(4k-2)}{4k-2+2i(T_n)}\\
&=k(2k-1),
\end{aligned}
\end{equation*}
we have $k(2k-1)<(2k-1+i(T_n))(m+1)$. Namely, $(2k-1+i(T_n))(m+1)-1\geq k(2k-1)$.
For $w_{j}$, $1\leq j\leq 2k-1$, let $N^{+}(w_{j})\cap U=\{u_{j}, \ldots, u_{j+m}\}$, and for $w_{2k-1+j}$, $1\leq j\leq \min\{i(T_n), 2k-1\}$, let $N^{+}(w_{2k-1+j})\cap U=\{u_{j}, u_{j+m+1}, \ldots, u_{j+2m}\}$. If $i(T_n)=2k$, let $N^{+}(w_{4k-1})\cap U=\{u_{1}, \ldots, u_{m}\}$. Since $k\geq 1$ and $i(T_n)\geq 2$, we have
\begin{equation*}
\begin{aligned}
2m-(2k-1)=2m-2k+1&\leq\frac{(n-1-i(T_n))(n+1-i(T_n))}{2(n-1+i(T_n))}-2k+1\\
&=\frac{2k(4k-2)-2k(4k-2+2i(T_n))+4k-2+2i(T_n)}{4k-2+2i(T_n)}\\
&=\frac{4k-2+2i(T_n)-4ki(T_n)}{4k-2+2i(T_n)}\\
&=\frac{(4k-2)(1-i(T_n))}{4k-2+2i(T_n)}\\
&< 0.
\end{aligned}
\end{equation*}
Namely, $2m<2k-1$. So
$$
(N^{+}(w_{j})\cap U)\cap(N^{+}(w_{2k-1+j})\cap U)=\{u_{j}\}.
$$
and elements in $\{u_{j}, u_{j+m+1}, \ldots, u_{j+2m}\}$ are pairwise distinct. We can see that $T_{n}$ is strongly connected.
Let $C(vw_{j})=j$, $C(u_{j}v)=j$, $C(w_{j}u_{j+1})=j+1$ and $C(w_{j}u_{p})=j$, for $1\leq j\leq 2k-1$ and $j+2\leq p\leq j+m$. Let $C(vw_{2k-1+j})=j$, and $C(w_{2k-1+j}u_{q})=j$, for $1\leq j\leq \min\{i(T_n), 2k-1\}$ and $j+m+1\leq q\leq j+2m$. If $i(T_n)=2k$, let $C(vw_{4k-1})=2k$ and $C(w_{4k-1}u_{s})=2k$, for $1\leq s\leq m$. Finally, color the remaining arcs with distinct new colors (See Figure \ref{Figure3}). Then
$\Delta^{+mon}(T_{n})=m$ and $\Delta^{-mon}(T_{n})=1$. Thus, $$2\Delta^{-mon}(T_{n})+\Delta^{+mon}(T_{n})=m+2,$$ but there is no rainbow triangle containing $v$ (indices are taken modulo $2k-1$).

\begin{figure}[h]
	\centering
	%\subfigure[$i(T_n)\leq 2k-1$ and $(2k-1)(m+1)+i(T_n)m< k(2k-1)$]{
		{\includegraphics[width=0.7\textwidth]{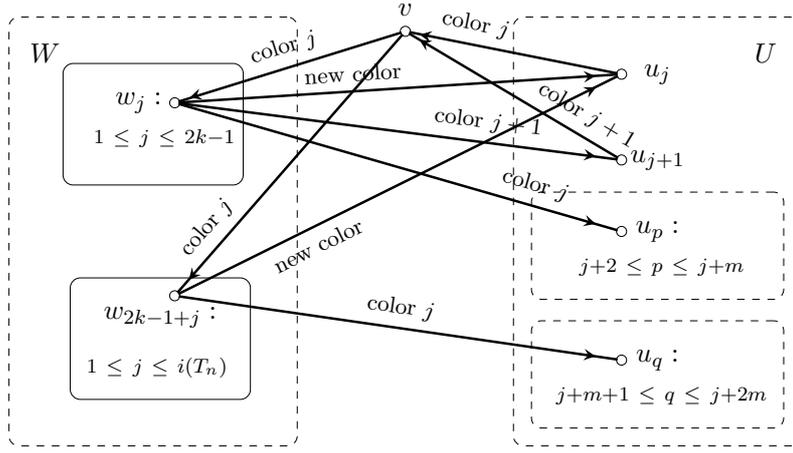}}
\vskip 1cm
	%\subfigure[$i(T_n)= 2k$]{
		{\includegraphics[width=0.7\textwidth]{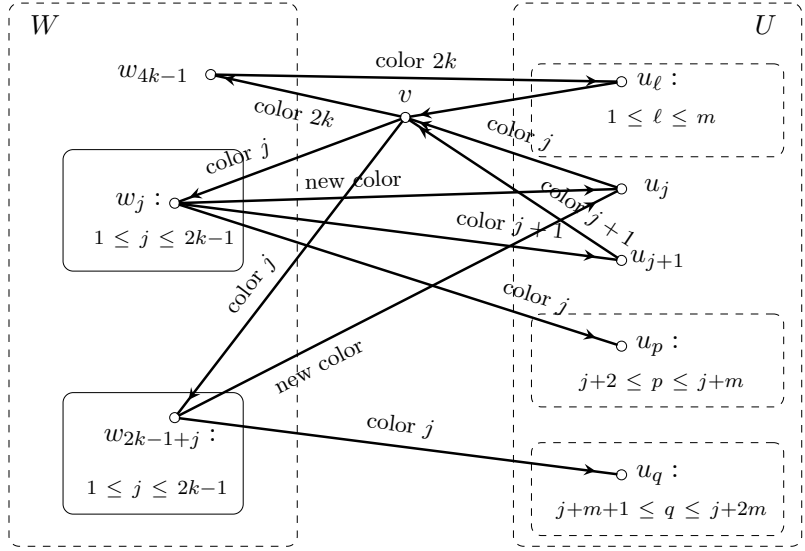}}
	\caption{Two digraphs described in case 2 of Example 3.\label{Figure3}}
\end{figure}

From this example we can see that the best bound of $2\Delta^{-mon}(T_{n})+\Delta^{+mon}(T_{n})$ is at most $m+1$. Namely, the gap between the bound in Theorem \ref{thm1+} in this case and the best possible bound is at most $1$.

\noindent\textbf{Example 4.} \quad This example shows that there is an arc-colored tournament $T_n$, in which $n$ is odd and $\Delta^{-mon}(T_{n})=\frac{n^{2}+n-3i(T_{n})^{2}}{12n}$, but there is no rainbow triangle in $T_n$.

Let $T_{n}$ be a regular tournament of order $11$ and $V(T_{n})=\{v_{0}, \ldots, v_{10}\}$. For each vertex $v_{i}\in V(T_{n})$, let $N^{+}(v_{i})=\{v_{i+1}, v_{i+3}, v_{i+4}, v_{i+5}, v_{i+9}\}$ (indices are taken modulo $11$). Let $C(v_{i}v_{i+1})=1$, $C(v_{i}v_{i+3})=2$, $C(v_{i}v_{i+4})=3$, $C(v_{i}v_{i+5})=4$ and $C(v_{i}v_{i+9})=5$ for $0\leq i\leq 10$. Then $\Delta^{-mon}(T_{n})=1=\frac{n+1}{12}$, and there is no rainbow triangle in $T_{n}$.

But this example is too special. So we give the next example to show that for regular tournaments the best possible upper bound of $\Delta^{-mon}(T_n)$ can not be larger than $\frac{n}{11}$.

\noindent\textbf{Example 5.} Replace each vertex $v_{i}$ of the tournament in Example 4 by a set $V_{i}$ of $k$ vertices. Let all arcs between $V_{i}$ and $V_{j}$ have the same directions and colors as the arc between $v_{i}$ and $v_{j}$. Add arcs to each $V_{i}$ to form a regular tournament $H_{i}$ of order $k$ and color all arcs in $H_i$ with a same new color. Denote the resulting graph by $D$. Then $D$ is a strongly connected regular tournament with $\Delta^{-mon}(D)=k=\frac{n}{11}$ and there is no rainbow triangle in $D$.

\section{Concluding remarks}
In Section \ref{sec:3}, we show the gap between the bound $\frac{(n-1-i(T_{n}))(n+1-i(T_{n}))}{4(n-1+i(T_{n}))}$ in Theorem \ref{thm1+} and the best possible bound is at most 1. We wonder whether this bound can be improved or there exists some examples showing the tightness of this bound.

For the existence of rainbow triangles in arc-colored regular tournaments, we conjecture that $\Delta^{-mon}(T_{n})<\frac{n}{11}$ is the best possible bound.

We also hope to make an improvement of the bound in Theorem \ref{thm2} for the existence of rainbow triangles in arc-colored tournaments with $i(T_{n})\neq 0$.


\begin{thebibliography}{10}
\bibitem{ADH: 2019}
{R. Aharoni, M. DeVos and R. Holzman},
\newblock Rainbow triangles and the Caccetta-H$\ddot{a}$ggkvist
conjecture,
\newblock {\em J. Graph Theory,} {\bf 92} (2019) 347--360.


\bibitem{Albert: 1995}
{M. Albert, A. Frieze and B. Reed},
\newblock Multicolored Hamilton cycles,
\newblock {\em Electron. J. Combin.,} {\bf 2} (1995),  \#R10.

\bibitem{Bai-Fujita-Zhang: 2018}
{Y. Bai, S. Fujita and S. Zhang},
\newblock Kernels by properly colored paths in arc-colored digraphs,
\newblock {\em Discrete Math.,} {\bf 341 (6)} (2018) 1523--1533.

\bibitem{Bai-Li-Zhang: 2018}
{Y. Bai, B. Li and S. Zhang},
\newblock Kernels by rainbow paths in arc-colored tournaments,
\newblock {\em Discrete Applied Mathematics,} (2019),
https://doi.org/10.1016/j.dam.2019.11.012.

\bibitem{Balogh: 2017}
{J. Bal$\acute{o}$gh, P. Hu, B. Lidick$\acute{y}$,
F. Pfender, J. Volec and M. Young}
\newblock Rainbow triangles in three-colored graphs,
\newblock {\em Journal of Combinatorial Theory,
Series B,} {\bf 126} (2017) 83--113.

\bibitem{Bang-Jensen: 2001}
{J. Bang-Jensen and G. Gutin},
\newblock Digraphs: Theory, Algorithms and Applications,
\newblock{\em Springer}, 2001.

\bibitem{Bland: 2016}
{A. Bland,}
\newblock Monochromatic sinks in $k$-arc colored tournaments,
\newblock {\em Graphs Combin.,} {\bf 32} (2016) 1279--1291.

\bibitem{Bondy: 2008}
{J.A. Bondy and U.S.R. Murty},
\newblock {\em Graph Theory},
\newblock {Springer}, 2008.

\bibitem{Ehard-Mohr: 2020}
{S. Ehard and E. Mohr},
\newblock Rainbow triangles and cliques in edge-colored graphs,
\newblock {\em European J. Combin., } {\bf 84} (2020) 103037.

\bibitem{Erdos: 1983}
{P. Erd\H{o}s, J. Ne\v{s}et\v{r}il and V. R\"{o}dl},
\newblock Some problems related to partitions of edges of a graph,
\newblock {\em Graphs and other combinatorial topics, Teubner, Leipzig }, (1983) 54--63.

\bibitem{Fox: 2015}
{J. Fox, A. Grinshpun and J. Pach},
\newblock The Erd\H{o}s-Hajnal conjecture for rainbow triangles,
\newblock {\em Journal of Combinatorial Theory, Series B,} {\bf 111} (2015) 75--125.

\bibitem{Frieze: 1993}
{A.M. Frieze and B.A. Reed},
\newblock Polychromatic Hamilton cycles,
\newblock {\em Discrete Math.,} {\bf 118} (1993) 69--74.


\bibitem{FLZ: 2017}
{S. Fujita, R. Li and S. Zhang},
\newblock Color degree and monochromatic degree conditions for short properly colored cycles in edge-colored graphs,
\newblock {\em J. Graph Theory, } {\bf 87} (2018) 362--373.

\bibitem{Fujita: 2014}
{S. Fujita, C. Magnant and K. Ozeki},
\newblock Rainbow generalizations of Ramsey theory: a survey,
\newblock{\em Graphs Combin.,} {\bf 26} (2010) 1--30.

\bibitem{FNXZ: 2019}
{S. Fujita, B. Ning, C. Xu and S. Zhang},
\newblock On sufficient conditions for rainbow cycles in edge-colored
graphs,
\newblock{\em Discrete Math.,} {\bf 342} (2019) 1956--1965.

\bibitem{Gallai: 1967}
{T. Gallai},
\newblock Transitiv orientierbare Graphen,
\newblock {\em Acta Math. Hungar.,} {\bf 18} (1967) 25--66.

\bibitem{Gyarfas: 2004}
{A. Gy\'{a}rf\'{a}s and G. Simonyi},
\newblock Edge colorings of complete graphs without tricolored triangles,
\newblock{\em J. Graph Theory,} {\bf 46} (2004) 211--216.

\bibitem{Hahn: 1986}
{G. Hahn and C. Thomassen},
\newblock Path and cycle sub-Ramsey numbers and an edge-colouring conjecture,
\newblock{\em Discrete Math.,} {\bf 62 (1)} (1986) 29--33.

\bibitem{HLO: 2017}
{C. Hoppen, H. Lefmann and K. Odermann},
\newblock On graphs with a large number of
edge-colorings avoiding a rainbow triangle,
\newblock{\em European J. Combin.,} {\bf 66} (2017) 168--190.



\bibitem{X.Li: 2008}
{M. Kano and X. Li},
\newblock Monochromatic and heterochromatic subgraphs in edge-colored graphs - a survey,
\newblock {\em Graphs Combin.,} {\bf 24} (2008) 237--263.

\bibitem{B.Li: 2014}
{B. Li, B. Ning, C. Xu and S. Zhang},
\newblock Rainbow triangles in edge-colored graphs,
\newblock {\em European J. Combin.,} {\bf 36} (2014) 453--459.

\bibitem{H.Li: 2013}
{H. Li},
\newblock Rainbow $C_3$'s and $C_4$'s in edge-colored graphs,
\newblock {\em Discrete Math.,} {\bf 313} (2013) 1893--1896.

\bibitem{Li-Wang: 2012}
{H. Li and G. Wang},
\newblock Color degree and heterochromatic cycles in edge-colored graphs,
\newblock {\em European J. Combin.,} {\bf 33} (2012) 1958--1964.

\bibitem{Li-Ning-Zhang: 2016}
{R. Li, B. Ning and S. Zhang},
\newblock Color degree sum conditions for rainbow triangles in edge-colored graphs,
\newblock {\em Graphs Combin.,} {\bf 32} (2016) 2001--2008.

\bibitem{Jin-Wang-Wang-Lv: 2020}
{Z. Jin, F. Wang, H. Wang and B. Lv},
\newblock Rainbow triangles in edge-colored Kneser graphs,
\newblock {\em Applied Mathematics and Computation,} {\bf 365} (2020) 124724.

\end{thebibliography}
\end{document}